\theoremstyle{plain}
\newtheorem*{theorem*}{Theorem}
\newtheorem{theorem}{Theorem}
\newtheorem{lemma}[theorem]{Lemma}
\newtheorem*{claim}{Claim}
\newtheorem{problem}[theorem]{Problem}
\theoremstyle{definition}
\theoremstyle{remark}
\pgfplotsset{compat = newest}
\title{Multipartite and Structural Results on Transparent Rectangle Visibility Graphs}
\author{Siraphob Buahong}
\address{Department of Mathematics, Faculty of Science, Chiang Mai University, Chiang Mai, Thailand}
\email{bsiraphob@hotmail.com}
\author{Teeradej Kittipassorn}
\address{Department of Mathematics and Computer Science, Faculty of Science, Chulalongkorn University, Bangkok 10330, Thailand, 
and Centre of Excellence in Mathematics, Ministry of Higher Education, Science, Research and Innovation, Thailand}
\email{teeradej.k@chula.ac.th}
\author{Jiratchaphat Nanta}
\address{Department of Mathematics, Faculty of Science, Chiang Mai University, Chiang Mai, Thailand}
\email{jiratchaphat\_nanta@cmu.ac.th}
\author{Piyashat Sripratak}
\address{Department of Mathematics, Faculty of Science, Chiang Mai University, Chiang Mai, Thailand}
\email{psripratak@gmail.com}
\author{Peerawit Suriya}
\address{Mathematics Institute, University of Warwick, Coventry, UK}
\email{peerawit.suriya@warwick.ac.uk}
\begin{document}
\pagestyle{plain}
\maketitle
\begin{abstract}
    We consider a graph representation in the plane, called the \textit{transparent rectangle visibility graph} (TRVG), where each vertex is represented by a rectangle in the plane with sides parallel to the plane axes, in a way that any two vertices are adjacent if and only if a vertical or horizontal line can be drawn from the interior of one rectangle to the other. Expanding upon previously done work by Juntarapomdach and Kittipassorn, we show that $K_{3,3,3}$ is not a TRVG, and classify complete $k$-partite TRVGs. We also prove that the complement of $C^2_n$ is not a TRVG whenever $n \geq 15$, and that every $k$-partite TRVG with $n$ vertices has at most $2(k-1)n-k(k-1)$ edges. Furthermore, we introduce a novel representation, the \textit{intersecting transparent rectangle visibility graph} (ITRVG), and show that there exists a graph that is an ITRVG but not a TRVG.
\end{abstract}

\section{Introduction}

The representation of graphs in various forms has been a long-standing topic of interest in the literature, especially ones that use rectilinear lines or shapes situated on a plane. 
The first representation of planar graphs via vertical and horizontal lines was introduced by Duchet, Hamidoune, Las Vergnas and Meyniel \cite{DHMH} in 1983, in which they used an S-representation.
An S-representation is a representation of a planar graph such that each vertex corresponds to a horizontal segment, and an edge between vertices $x$ and $y$ corresponds to a vertical segment which joins the $x$ and $y$ horizontal segments vertically, and does not intersect any other segments. 
The S-representation is later called a visibility representation in the literature \cite{TRTI, HZXH}.

A more generalized form of S-representation, the visibility graph \cite{LMW}, is a graph such that each vertex can be mapped to a vertical segment in the plane, in a way that for any two adjacent vertices in the graph, their corresponding segments `see' each other by an uninterrupted horizontal line of sight. 
Some authors switched the orientation of the segments and the visibility lines \cite{TA}. 
The main point is that the segments views each other via an orthogonal line.
This type of graph, and its generalizations, has been studied by various authors \cite{BEFH, AEGLST}, and has shown usage in the construction of very-large-scale integration (VLSI) systems \cite{SLMLW}.

Then, in 1976, Garey, Johnson, and So \cite{GJS} introduced the notion of rectangle visibility graphs (RVGs), in which they used unit squares instead of segments to represent vertices, and the same concept of visibility between two squares whenever their corresponding vertices are adjacent to each other. 
Afterwards, there have been works on the properties of RVGs, such as the upper bound of edges in graphs with $n$ vertices \cite{HSV}, conditions for a bipartite graph $K_{m,n}$ to be an RVG \cite{DH}, classes of RVGs that can be decomposed into two caterpillar forests \cite{BDHS}, and more recently the area and size of the boundary of an RVG \cite{CDLN}. 

In this paper, instead of considering only an uninterrupted line of vision, we remove this restriction entirely and let rectangles see through one another. That is, we consider the rectangles to be transparent. 

We say that a graph is a \emph{transparent rectangle visibility graph} (TRVG) if it can be represented by a set of disjoint rectangles in the $xy$-plane with sides parallel to the axes, such that for any two vertices adjacent to one another, their corresponding rectangles see each other by a vertical or horizontal line of sight. Note that unlike RVGs, this line does not stop at its first intersection with another rectangle.

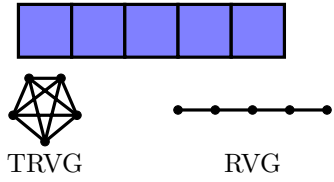
\begin{figure}[h]
    \centering
    \begin{tikzpicture}[scale=0.7, every node/.style={inner sep=1pt}]
        \filldraw[blue!50!white, draw=black, very thick] (0,0) rectangle (1,1);
        \filldraw[blue!50!white, draw=black, very thick] (1,0) rectangle (2,1);
        \filldraw[blue!50!white, draw=black, very thick] (2,0) rectangle (3,1);
        \filldraw[blue!50!white, draw=black, very thick] (3,0) rectangle (4,1);
        \filldraw[blue!50!white, draw=black, very thick] (4,0) rectangle (5,1);

        \coordinate (k1) at (0.2,-0.4);
        \coordinate (k2) at (0.8,-0.4);
        \coordinate (k3) at (1.1,-1.1);
        \coordinate (k4) at (0.5,-1.6);
        \coordinate (k5) at (-0.1,-1.1);

        \foreach \i in {1,...,5} {
            \filldraw (k\i) circle (0.08cm);
        }

        \draw[very thick] (k1)--(k2)--(k3)--(k4)--(k5)--(k1);
        \draw[very thick] (k1)--(k3) (k1)--(k4) (k2)--(k4)
                          (k2)--(k5) (k3)--(k5);

        \node at (0.5,-2.0) {TRVG};

        \coordinate (p1) at (3.0,-1.0);
        \coordinate (p2) at (3.7,-1.0);
        \coordinate (p3) at (4.4,-1.0);
        \coordinate (p4) at (5.1,-1.0);
        \coordinate (p5) at (5.8,-1.0);

        \foreach \i in {1,...,5} {
            \filldraw (p\i) circle (0.08cm);
        }

        \draw[very thick] (p1)--(p2)--(p3)--(p4)--(p5);

        \node at (4.4,-2.0) {RVG};
    \end{tikzpicture}
    \caption{A difference between a TRVG and an RVG}
    \label{fig:trvgandrvg}
\end{figure}

Some results on TRVGs have been obtained by Juntarapomdach and Kittipassorn ~\cite{CJTK}, that any threshold graph, cycle, tree are TRVGs, and also any rectangular, triangular, and hexagonal grid graphs. Furthermore, they have shown that bipartite TRVGs with $n$ vertices can have at most $2n-2$ edges, and that any $K_{p,q}$ is a TRVG if and only if $(p,q)$ has $\min\{p,q\} \leq 2$, or $(p,q) \in \{(3,3),(3,4)\}$.

We then further consider more graphs, such as the multipartite graphs, and the complement of the square of a cycle $C^2_n$. Furthermore, through allowing the intersections between rectangles, we define a new representation of graphs: \textit{intersecting rectangle visibility graphs} (ITRVG).

The rest of the paper is organized as follows. In Section~\ref{Complete_Bipartite}, we show that $K_{3,3,3}$ is a non-TRVG, and then we obtain the classification for a complete multipartite graph to be a TRVG. We prove that the complement of the square of a cycle $C_n^2$ is a non-TRVG for $n \ge 15$ in Section~\ref{complement}. In Section~\ref{k-partite}, we obtain an upper bound for the number of edges of a multipartite graph. We introduce a new class of graphs, called ITRVG, and show that TRVG is a proper subclass of this class in Section~\ref{irvg}. Finally, in Section~\ref{conclude}, we conclude the paper with some open problems.

\section{Complete Multipartite Graph}\label{Complete_Bipartite}

In the study of transparent rectangle visibility graphs (TRVGs) by Juntarapomdach and Kittipassorn \cite{CJTK}, one of the results is the TRVG classification of complete bipartite graphs. The theorem is as follows.

\begin{theorem}\label{thm:bipartite}
    For $p \leq q$, $K_{p,q}$ is a TRVG if and only if $p \leq 2$ or $(p,q) \in \{(3,3),(3,4)\}$.
\end{theorem}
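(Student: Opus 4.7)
The plan is to prove each direction separately. For the \emph{forward} direction, the star $K_{1,q}$ is trivially a TRVG, so the real content lies in $K_{2,q}$, $K_{3,3}$, and $K_{3,4}$. For $K_{2,q}$, I would use the following explicit construction: place $B_1,\dots,B_q$ as small axis-aligned squares along the diagonal so that all their $x$-projections and all their $y$-projections are pairwise disjoint; then place $A_1$ as a tall thin rectangle far to the right whose $y$-projection covers every $B_j$'s $y$-projection, and $A_2$ as a wide thin rectangle far above whose $x$-projection covers every $B_j$'s $x$-projection. Each $B_j$ then sees $A_1$ horizontally and $A_2$ vertically, while $A_1$ and $A_2$ have disjoint $x$- and $y$-projections and are non-adjacent. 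For $K_{3,3}$ and $K_{3,4}$, I would exhibit direct ad hoc pictures and check adjacency by inspection.

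For the \emph{backward} direction, the starting observation is that two disjoint axis-aligned rectangles cannot simultaneously have overlapping $x$-projections and overlapping $y$-projections, so two disjoint rectangles are non-adjacent in a TRVG exactly when both projections are disjoint. Applied inside each colour class of $K_{p,q}$, the $A_i$'s then have pairwise disjoint $x$-projections and pairwise disjoint $y$-projections, and likewise for the $B_j$'s. Each of the $pq$ edges of $K_{p,q}$ corresponds to either an $x$-overlap or a $y$-overlap between some $A_i$ and some $B_j$.

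The crux is the following lemma about disjoint intervals: given two families of pairwise disjoint intervals on the real line, one of size $p$ and one of size $q$, the bipartite intersection graph is a forest, hence has at most $p+q-1$ edges. My plan for the lemma is a shortest-cycle argument: in a hypothetical even cycle $I_{a_1}J_{b_1}I_{a_2}\cdots J_{b_k}I_{a_1}$, tracking consecutive $I$-intervals along the line and using that each $J$-neighbour spans the gap between them forces two $J$-intervals to contain a common gap between two $I$-intervals, contradicting disjointness of the $J$-family. Combining the lemma with the previous paragraph yields $pq \le 2(p+q-1)$, which fails at $(p,q)=(3,5)$ and $(p,q)=(4,4)$. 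Since induced subgraphs of TRVGs are TRVGs (simply discard the unused rectangles), this rules out every $K_{p,q}$ with $p\ge 3$ and $q\ge 5$, and every $K_{p,q}$ with $p,q\ge 4$, completing the classification.

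I expect the main obstacle to be twofold. The constructions for $K_{3,3}$ and especially $K_{3,4}$ are tight, since for $K_{3,4}$ the bound $pq\le 2(p+q-1)$ holds with equality and leaves essentially no slack; the two bipartite overlap graphs must be complementary spanning trees on the diagonal interval endpoints, so the layout has to be chosen carefully. The forest lemma, while geometrically intuitive, also needs a clean formalisation of the cycle case, and I expect the shortest-cycle argument above to be the simplest route.
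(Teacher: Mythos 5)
The paper does not actually prove this theorem: it is quoted from Juntarapomdach and Kittipassorn \cite{CJTK}, so there is no in-paper proof to compare against. That said, your necessity argument is exactly the machinery that source (and this paper) uses: your interval lemma --- two families of pairwise disjoint intervals on a line have a bipartite intersection graph with at most $p+q-1$ edges --- is precisely Lemma~\ref{lem:bound}, and the resulting bound $pq\le 2(p+q-1)$ is the $2n-2$ bipartite edge bound attributed to \cite{CJTK} in Section~\ref{k-partite}. Your reduction to that lemma is sound: for disjoint axis-parallel rectangles, overlapping $x$-projections together with overlapping $y$-projections would force the rectangles themselves to intersect, so non-adjacency is equivalent to disjointness of both projections; each part then gives two families of pairwise disjoint intervals, each edge of $K_{p,q}$ is realized by exactly one of the two projections, and the count excludes $K_{3,5}$ and $K_{4,4}$, hence all remaining cases via induced subgraphs. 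Your shortest-cycle proof of the lemma also works (on a chordless cycle, the two $J$-neighbours of the leftmost $I$-interval must both contain the right endpoint of that interval, contradicting their disjointness), though a cleaner count is that a $J$-interval meeting $k$ of the $I$'s occupies $k-1$ of the $p-1$ gaps between consecutive $I$'s, and distinct $J$'s occupy disjoint sets of gaps.

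The one genuine gap is the sufficiency for $(3,3)$ and $(3,4)$: \emph{exhibit direct ad hoc pictures and check adjacency by inspection} is a promissory note, not a proof, and for $K_{3,4}$ it is arguably the hardest part of the theorem. As you yourself observe, the bound holds with equality there, so the horizontal and vertical visibility relations must partition $E(K_{3,4})$ into two spanning trees, each realizable as an intersection pattern of two disjoint-interval families subject to the diagonal placement of each part; nothing in your write-up verifies that this can be done. Such a representation does exist (it is constructed in \cite{CJTK}), but until you produce and check it, the ``if'' direction is not established. The $K_{1,q}$ and $K_{2,q}$ constructions you give are correct and complete.
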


In this section, we will explore further the complete $k$-partite graphs where $k \geq 3$ to determine which of them are TRVGs. The key result of this section is Lemma~\ref{lem:K_{3,3,3}} which is then used to classify other complete $k$-partite graphs in Theorem~\ref{thm:otherK}.

\begin{lemma}\label{lem:K_{3,3,3}}
    $K_{3,3,3}$ is a non-TRVG.
\end{lemma}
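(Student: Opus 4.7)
The plan is a proof by contradiction. Suppose $K_{3,3,3}$ admits a TRVG representation with color classes $A = \{a_1, a_2, a_3\}$, $B = \{b_1, b_2, b_3\}$, $C = \{c_1, c_2, c_3\}$. For each vertex $v$, write $R_v = X_v \times Y_v$ where $X_v, Y_v$ are the open $x$- and $y$-projections. Since rectangles are pairwise disjoint, no two can have $X_u \cap X_v \neq \emptyset$ \emph{and} $Y_u \cap Y_v \neq \emptyset$ simultaneously; so each adjacent pair overlaps in exactly one coordinate direction, and I will speak of $X$-visibility versus $Y$-visibility.

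The first step is to pin down the geometry within a class. Since the three rectangles of each class are pairwise non-adjacent, their $X$-projections are pairwise disjoint and their $Y$-projections are pairwise disjoint. Each class therefore comes with two linear orders (one per axis), which form a staircase pattern. By relabeling indices and possibly reflecting the plane horizontally or vertically, I can normalize $A$ so that $X_{a_1}<X_{a_2}<X_{a_3}$ in the $x$-order, and the $y$-order of $A$ reduces to a small number of canonical patterns.

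The second step is a type analysis for vertices outside $A$. For each $v\in B\cup C$, define $S_X(v)=\{i:X_v\cap X_{a_i}\neq\emptyset\}$ and $S_Y(v)=\{i:Y_v\cap Y_{a_i}\neq\emptyset\}$. Since $v$ is adjacent to every $a_i$ via exactly one of these visibilities, $\{S_X(v),S_Y(v)\}$ is a partition of $\{1,2,3\}$. The key structural fact is that $S_X(v)$ is a \emph{consecutive} block in the $x$-order of $A$: if $X_v$ meets $X_{a_i}$ and $X_{a_k}$ with $i<k$, then $X_v$ is an interval covering the intermediate gap, so it also meets $X_{a_j}$ for each $i<j<k$. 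The analogous statement restricts $S_Y(v)$ to a consecutive block in the $y$-order of $A$. This leaves only a short list of admissible types for each vertex in $B\cup C$.

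The final step is a case analysis using the six vertices in $B\cup C$. I would first classify the possible type assignments, observing that at most one vertex of each class can be of a ``spanning'' type (with $S_X=\{1,2,3\}$ or $S_Y=\{1,2,3\}$), since two such vertices in the same class would force an $X$- or $Y$-overlap between them. Then, combining (i) the admissible positions for each type relative to $A$, (ii) the independence constraints inside $B$ and inside $C$ (forcing disjointness in both projections), and (iii) the full adjacency between $B$ and $C$, I would show that in each remaining configuration the required placement of a third rectangle is geometrically impossible. The main obstacle is the sheer size of this case analysis: one has to keep track simultaneously of the $y$-order of $A$, the type assignments inside $B$ and $C$, and the relative positions of the resulting rectangles; extensive use of the horizontal/vertical reflection and relabeling symmetries is necessary to reduce the work to a manageable collection of subcases, each of which closes with a short geometric contradiction.
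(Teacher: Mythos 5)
Your setup is sound and is essentially the paper's own framework in different clothing: the disjointness of same-class projections giving two linear orders (a staircase), the partition $\{S_X(v),S_Y(v)\}$ of $\{1,2,3\}$, and the consecutivity of $S_X(v)$ and $S_Y(v)$ are exactly the paper's ``strip'' machinery (a rectangle whose $S_X$ contains two non-adjacent indices must contain the intervening vertical gap, i.e.\ a strip). The normalization of the class $A$ into a small number of canonical patterns also matches the paper, which reduces to two configurations of the first part up to symmetry. So the skeleton is correct and the approach is the same.

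The genuine gap is that the proof stops exactly where the real work begins. Your final step announces that ``in each remaining configuration the required placement of a third rectangle is geometrically impossible,'' but exhibits none of these contradictions; the entire content of the lemma lives in that casework, and it is not routine. The paper's argument there is a chain of \emph{forcing} steps between the two non-$A$ classes: after pinning down the types of the $B$-vertices, one shows that a specific dividing line (the border of a strip) separates a $C$-rectangle from a $B$-rectangle in one axis, so their required adjacency must be realized in the other axis; doing this for two $C$-rectangles forces the two $C$-rectangles to see each other, a contradiction. Nothing in your plan identifies these dividing lines or the forced visibilities, so the proposal as written does not establish the lemma. A secondary, fixable flaw: your justification that at most one vertex per class is ``spanning'' only covers two vertices that are both $X$-spanning (or both $Y$-spanning); one $X$-spanning and one $Y$-spanning vertex in the same class do not obviously force an overlap, so that reduction needs a separate argument or must be dropped.
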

\begin{proof}
    Consider the first part of $K_{3,3,3}$. Up to translation, rotation, and reflection, there are two possible ways to construct blue rectangles, $B_1, B_2$ and $B_3$, representing these points (see Figure~\ref{fig:proofK333_1}). We then consider placements of points in the other parts through the concept of \textit{strips} which are the vertical areas and the horizontal areas of the gaps between each pair of consecutive blue rectangles (marked as $\alpha_1,\alpha_2,\beta_1,$ and $\beta_2$ in Figure~\ref{fig:proofK333_1}). A rectangle \textit{contains a vertical strip} if the left and right borders of the strip intersect the interior of the rectangle; therefore, the rectangle vertically sees both blue rectangles adjacent to the strip. The definition of containment is similar for horizontal strips. Note that if an other-color rectangle does not contain any strips, it can see at most one blue rectangle vertically and one blue rectangle horizontally. Thus, we have the properties of rectangle placement as follows: 
    \begin{compactitem}
        \item An other-color rectangle must contain at least one strip in order to see all three blue rectangles.
        \item The same-color rectangles cannot contain the same strip.
        \item If an other-color rectangle contains only one strip, it must see a blue rectangle that is not adjacent to this strip via another axis.
        \item One rectangle cannot see another rectangle in both axes.
    \end{compactitem} 

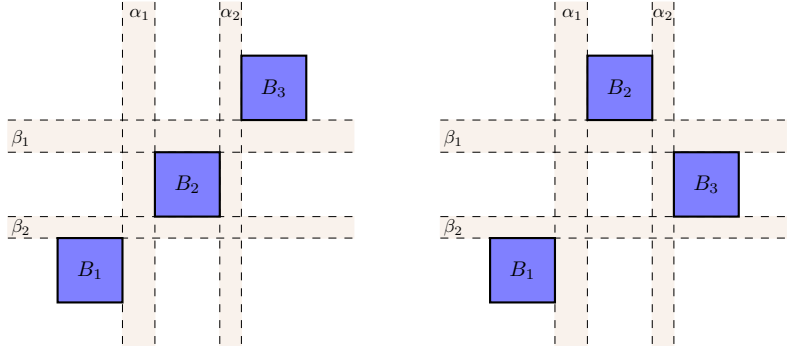
\begin{figure}[h]
    \centering
    \begin{tikzpicture}[scale = 0.2mm]
        \filldraw[brown!10!white] (-6.35,-4) rectangle (-5.6,4);
        \filldraw[brown!10!white] (-4.1,-4) rectangle (-3.6,4);
        \filldraw[brown!10!white] (-9,-1.5) rectangle (-1,-1);
        \filldraw[brown!10!white] (-9,0.5) rectangle (-1,1.25);
        
        \draw[dashed, very thin] (-6.35,4) -- (-6.35,-4);
        \draw[dashed, very thin] (-5.6,4) -- (-5.6,-4);

        \draw[dashed, very thin] (-4.1,4) -- (-4.1,-4);
        \draw[dashed, very thin] (-3.6,4) -- (-3.6,-4);

        \draw[dashed, very thin] (-9,-1.5) -- (-1,-1.5);
        \draw[dashed, very thin] (-9,-1) -- (-1,-1);

        \draw[dashed, very thin] (-9,0.5) -- (-1,0.5);
        \draw[dashed, very thin] (-9,1.25) -- (-1,1.25);

        \filldraw[blue!50!white, draw=black, thick] (-7.85,-3) rectangle (-6.35,-1.5);
        \draw (-7.1,-2.25) node[rectangle,scale=0.8] {$B_1$};
        \filldraw[blue!50!white, draw=black, thick] (-5.6,-1) rectangle (-4.1,0.5);
        \draw (-4.85,-0.25) node[rectangle,scale=0.8] {$B_2$};
        \filldraw[blue!50!white, draw=black, thick] (-3.6,1.25) rectangle (-2.1,2.75);
        \draw (-2.85,2) node[rectangle,scale=0.8] {$B_3$};
        \draw (-5.95,3.7) node[rectangle,scale=0.7] {$\alpha_1$};
        \draw (-3.85,3.7) node[rectangle,scale=0.7] {$\alpha_2$};
        \draw (-8.7,-1.25) node[rectangle,scale=0.7] {$\beta_2$};
        \draw (-8.7,0.825) node[rectangle,scale=0.7] {$\beta_1$};

        \filldraw[brown!10!white] (3.65,-4) rectangle (4.4,4);
        \filldraw[brown!10!white] (5.9,-4) rectangle (6.4,4);
        \filldraw[brown!10!white] (1,-1.5) rectangle (9,-1);
        \filldraw[brown!10!white] (1,0.5) rectangle (9,1.25);
        
        \draw[dashed, very thin] (3.65,4) -- (3.65,-4);
        \draw[dashed, very thin] (4.4,4) -- (4.4,-4);

        \draw[dashed, very thin] (5.9,4) -- (5.9,-4);
        \draw[dashed, very thin] (6.4,4) -- (6.4,-4);

        \draw[dashed, very thin] (1,-1.5) -- (9,-1.5);
        \draw[dashed, very thin] (1,-1) -- (9,-1);

        \draw[dashed, very thin] (1,0.5) -- (9,0.5);
        \draw[dashed, very thin] (1,1.25) -- (9,1.25);

        \filldraw[blue!50!white, draw=black, thick] (2.15,-3) rectangle (3.65,-1.5);
        \draw (2.9,-2.25) node[rectangle,scale=0.8] {$B_1$};
        \filldraw[blue!50!white, draw=black, thick] (4.4,1.25) rectangle (5.9,2.75);
        \draw (5.15,2) node[rectangle,scale=0.8] {$B_2$};
        \filldraw[blue!50!white, draw=black, thick] (6.4,-1) rectangle (7.9,0.5);
        \draw (7.15,-0.25) node[rectangle,scale=0.8] {$B_3$};
        \draw (4.05,3.7) node[rectangle,scale=0.7] {$\alpha_1$};
        \draw (6.15,3.7) node[rectangle,scale=0.7] {$\alpha_2$};
        \draw (1.3,-1.25) node[rectangle,scale=0.7] {$\beta_2$};
        \draw (1.3,0.825) node[rectangle,scale=0.7] {$\beta_1$};
    \end{tikzpicture}
    \caption{Possible rectangle representations of points in the first part}
    \label{fig:proofK333_1}
\end{figure}

\textbf{Case 1.} Consider the first configuration of placement of blue rectangles. In placing red rectangles representing points in the second part of $K_{3,3,3}$, by the pigeonhole principle, there are at least two red rectangles that see $B_2$ via the same axis. Without loss of generality, let them see $B_2$ vertically. Following placement properties above, one of these two red rectangles, say $R_1$, must contain only $\alpha_1$ and is above $B_2$ seeing $B_3$ horizontally, and the other, say $R_2$, contains only $\alpha_2$ and is below $B_2$ seeing $B_1$ horizontally (see Figure~\ref{fig:proofK333_2}). Using a similar argument with placement of yellow rectangles representing points in the third part, there are at least two yellow rectangles, namely $Y_1$ and $Y_2$, that see $B_2$ via the same axis.

\textbf{Case 1.1.} $Y_1$ and $Y_2$ see $B_2$ vertically. Following the placement properties, without loss of generality, we then have placements of $Y_1$ and $Y_2$ similarly to placements of $R_1$ and $R_2$, respectively (for example, see Figure~\ref{fig:proofK333_2}~(a)). As $Y_1$ is above $B_2$, its bottom side cannot be lower than the lower border of $\beta_1$ otherwise $Y_1$ will see $B_2$ horizontally, which is not possible. Similarly, $R_2$ which is set below $B_2$ must have its top side not higher than the upper border of $\beta_2$, and thus lower than the lower border of $\beta_1$. Using the lower border of $\beta_1$ as a dividing line, we have that $Y_1$ cannot see $R_2$ horizontally. Hence, in this case, $Y_1$ is forced to see $R_2$ vertically. For a similar reason, we also have that $Y_2$ must see $R_1$ vertically. This is impossible since doing so would lead to $Y_1$ seeing $Y_2$.

\textbf{Case 1.2.} $Y_1$ and $Y_2$ see $B_2$ horizontally. Again, following the placement properties, without loss of generality, $Y_1$ must contain only $\beta_1$ and is to the left of $B_2$ seeing $B_1$ vertically, and $Y_2$ must contain only $\beta_2$ and is to the right of $B_2$ seeing $B_3$ vertically (for example, see Figure~\ref{fig:proofK333_2}~(b)). As $Y_1$ is to the left of $B_2$, its right side cannot exceed the right border of $\alpha_1$. Note that $R_2$ cannot have its interior intersect with any vertical line of sight of $R_1$. This includes the right border of $\alpha_1$. Thus, together with $R_2$ being to the right of $R_1$, the left side of $R_2$ must be to the right of the right border of $\alpha_1$. Now, using the right border of $\alpha_1$ as a dividing line, we can say that $Y_1$ cannot see $R_2$ vertically. Hence, in this case, $Y_1$ must see $R_2$ horizontally. Similarly, we also find that $Y_2$ must see $R_1$ horizontally. Therefore, This case is also impossible since doing so would lead to $Y_1$ seeing $Y_2$.

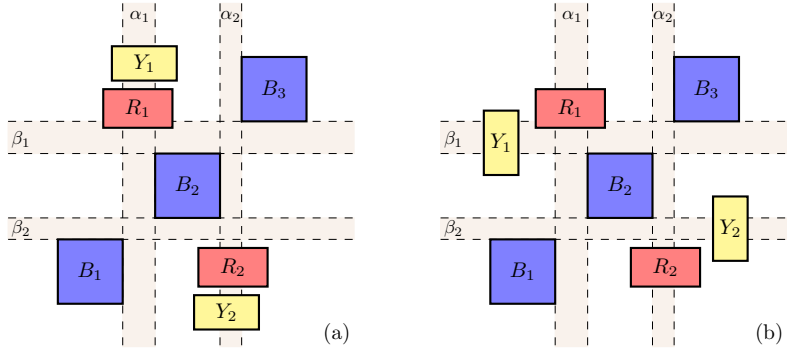
\begin{figure}[h]
    \centering
    \begin{tikzpicture}[scale = 0.2mm]
        \filldraw[brown!10!white] (-1.35,-4) rectangle (-0.6,4);
        \filldraw[brown!10!white] (0.9,-4) rectangle (1.4,4);
        \filldraw[brown!10!white] (-4,-1.5) rectangle (4,-1);
        \filldraw[brown!10!white] (-4,0.5) rectangle (4,1.25);
        
        \draw[dashed, very thin] (-1.35,4) -- (-1.35,-4);
        \draw[dashed, very thin] (-0.6,4) -- (-0.6,-4);

        \draw[dashed, very thin] (0.9,4) -- (0.9,-4);
        \draw[dashed, very thin] (1.4,4) -- (1.4,-4);

        \draw[dashed, very thin] (-4,-1.5) -- (4,-1.5);
        \draw[dashed, very thin] (-4,-1) -- (4,-1);

        \draw[dashed, very thin] (-4,0.5) -- (4,0.5);
        \draw[dashed, very thin] (-4,1.25) -- (4,1.25);

        \filldraw[blue!50!white, draw=black, thick] (-2.85,-3) rectangle (-1.35,-1.5);
        \draw (-2.1,-2.25) node[rectangle,scale=0.8] {$B_1$};
        \filldraw[blue!50!white, draw=black, thick] (-0.6,-1) rectangle (0.9,0.5);
        \draw (0.15,-0.25) node[rectangle,scale=0.8] {$B_2$};
        \filldraw[blue!50!white, draw=black, thick] (1.4,1.25) rectangle (2.9,2.75);
        \draw (2.15,2) node[rectangle,scale=0.8] {$B_3$};
        \draw (-0.95,3.7) node[rectangle,scale=0.7] {$\alpha_1$};
        \draw (1.15,3.7) node[rectangle,scale=0.7] {$\alpha_2$};
        \draw (-3.7,-1.25) node[rectangle,scale=0.7] {$\beta_2$};
        \draw (-3.7,0.825) node[rectangle,scale=0.7] {$\beta_1$};

        \filldraw[red!50!white, draw=black, thick] (-1.8,1.1) rectangle (-0.2,2);
        \draw (-1,1.55) node[rectangle,scale=0.8] {$R_1$};
        \filldraw[red!50!white, draw=black, thick] (0.4,-2.6) rectangle (2,-1.7);
        \draw (1.2,-2.15) node[rectangle,scale=0.8] {$R_2$};

        \filldraw[yellow!50!white, draw=black, thick] (-1.6,2.2) rectangle (-0.1,3);
        \draw (-0.85,2.6) node[rectangle,scale=0.8] {$Y_1$};
        \filldraw[yellow!50!white, draw=black, thick] (0.3,-3.6) rectangle (1.8,-2.8);
        \draw (1.05,-3.2) node[rectangle,scale=0.8] {$Y_2$};

        \filldraw[brown!10!white] (8.65,-4) rectangle (9.4,4);
        \filldraw[brown!10!white] (10.9,-4) rectangle (11.4,4);
        \filldraw[brown!10!white] (6,-1.5) rectangle (14,-1);
        \filldraw[brown!10!white] (6,0.5) rectangle (14,1.25);
        
        \draw[dashed, very thin] (8.65,4) -- (8.65,-4);
        \draw[dashed, very thin] (9.4,4) -- (9.4,-4);

        \draw[dashed, very thin] (10.9,4) -- (10.9,-4);
        \draw[dashed, very thin] (11.4,4) -- (11.4,-4);

        \draw[dashed, very thin] (6,-1.5) -- (14,-1.5);
        \draw[dashed, very thin] (6,-1) -- (14,-1);

        \draw[dashed, very thin] (6,0.5) -- (14,0.5);
        \draw[dashed, very thin] (6,1.25) -- (14,1.25);

        \filldraw[blue!50!white, draw=black, thick] (7.15,-3) rectangle (8.65,-1.5);
        \draw (7.9,-2.25) node[rectangle,scale=0.8] {$B_1$};
        \filldraw[blue!50!white, draw=black, thick] (9.4,-1) rectangle (10.9,0.5);
        \draw (10.15,-0.25) node[rectangle,scale=0.8] {$B_2$};
        \filldraw[blue!50!white, draw=black, thick] (11.4,1.25) rectangle (12.9,2.75);
        \draw (12.15,2) node[rectangle,scale=0.8] {$B_3$};
        \draw (9.05,3.7) node[rectangle,scale=0.7] {$\alpha_1$};
        \draw (11.15,3.7) node[rectangle,scale=0.7] {$\alpha_2$};
        \draw (6.3,-1.25) node[rectangle,scale=0.7] {$\beta_2$};
        \draw (6.3,0.825) node[rectangle,scale=0.7] {$\beta_1$};

        \filldraw[red!50!white, draw=black, thick] (8.2,1.1) rectangle (9.8,2);
        \draw (9,1.55) node[rectangle,scale=0.8] {$R_1$};
        \filldraw[red!50!white, draw=black, thick] (10.4,-2.6) rectangle (12,-1.7);
        \draw (11.2,-2.15) node[rectangle,scale=0.8] {$R_2$};

        \filldraw[yellow!50!white, draw=black, thick] (7,0) rectangle (7.8,1.5);
        \draw (7.4,0.75) node[rectangle,scale=0.8] {$Y_1$};
        \filldraw[yellow!50!white, draw=black, thick] (12.3,-2) rectangle (13.1,-0.5);
        \draw (12.7,-1.25) node[rectangle,scale=0.8] {$Y_2$};
        
        \draw (3.6,-3.7) node[rectangle,scale=0.8] {(a)};
        \draw (13.6,-3.7) node[rectangle,scale=0.8] {(b)};
    \end{tikzpicture}
    \caption{Examples of placement of red and yellow rectangles in  (a) Case 1.1 and (b) Case 1.2}
    \label{fig:proofK333_2}
\end{figure}
\begin{figure}[h]
    \centering
    \begin{tikzpicture}[scale = 0.2mm]
        \filldraw[brown!10!white] (-1.35,-4) rectangle (-0.6,4);
        \filldraw[brown!10!white] (0.9,-4) rectangle (1.4,4);
        \filldraw[brown!10!white] (-4,-1.5) rectangle (4,-1);
        \filldraw[brown!10!white] (-4,0.5) rectangle (4,1.25);
        
        \draw[dashed, very thin] (-1.35,4) -- (-1.35,-4);
        \draw[dashed, very thin] (-0.6,4) -- (-0.6,-4);

        \draw[dashed, very thin] (0.9,4) -- (0.9,-4);
        \draw[dashed, very thin] (1.4,4) -- (1.4,-4);

        \draw[dashed, very thin] (-4,-1.5) -- (4,-1.5);
        \draw[dashed, very thin] (-4,-1) -- (4,-1);

        \draw[dashed, very thin] (-4,0.5) -- (4,0.5);
        \draw[dashed, very thin] (-4,1.25) -- (4,1.25);

        \filldraw[blue!50!white, draw=black, thick] (-2.85,-3) rectangle (-1.35,-1.5);
        \draw (-2.1,-2.25) node[rectangle,scale=0.8] {$B_1$};
        \filldraw[blue!50!white, draw=black, thick] (-0.6,1.25) rectangle (0.9,2.75);
        \draw (0.15,2) node[rectangle,scale=0.8] {$B_2$};
        \filldraw[blue!50!white, draw=black, thick] (1.4,-1) rectangle (2.9,0.5);
        \draw (2.15,-0.25) node[rectangle,scale=0.8] {$B_3$};
        \draw (-0.95,3.7) node[rectangle,scale=0.7] {$\alpha_1$};
        \draw (1.15,3.7) node[rectangle,scale=0.7] {$\alpha_2$};
        \draw (-3.7,-1.25) node[rectangle,scale=0.7] {$\beta_2$};
        \draw (-3.7,0.825) node[rectangle,scale=0.7] {$\beta_1$};

        \filldraw[red!50!white, draw=black, thick] (-1.6,0.3) rectangle (-0.4,-0.7);
        \draw (-1,-0.2) node[rectangle,scale=0.8] {$R_1$};
        \filldraw[red!50!white, draw=black, thick] (0.7,-3) rectangle (2,-2.2);
        \draw (1.35,-2.6) node[rectangle,scale=0.8] {$R_2$};
        \filldraw[red!50!white, draw=black, thick] (-2.8,0.4) rectangle (-2,1.6);
        \draw (-2.4,1) node[rectangle,scale=0.8] {$R_3$};

         \filldraw[brown!10!white] (8.65,-4) rectangle (9.4,4);
        \filldraw[brown!10!white] (10.9,-4) rectangle (11.4,4);
        \filldraw[brown!10!white] (6,-1.5) rectangle (14,-1);
        \filldraw[brown!10!white] (6,0.5) rectangle (14,1.25);
        
        \draw[dashed, very thin] (8.65,4) -- (8.65,-4);
        \draw[dashed, very thin] (9.4,4) -- (9.4,-4);

        \draw[dashed, very thin] (10.9,4) -- (10.9,-4);
        \draw[dashed, very thin] (11.4,4) -- (11.4,-4);

        \draw[dashed, very thin] (6,-1.5) -- (14,-1.5);
        \draw[dashed, very thin] (6,-1) -- (14,-1);

        \draw[dashed, very thin] (6,0.5) -- (14,0.5);
        \draw[dashed, very thin] (6,1.25) -- (14,1.25);

        \filldraw[blue!50!white, draw=black, thick] (7.15,-3) rectangle (8.65,-1.5);
        \draw (7.9,-2.25) node[rectangle,scale=0.8] {$B_1$};
        \filldraw[blue!50!white, draw=black, thick] (9.4,1.25) rectangle (10.9,2.75);
        \draw (10.15,2) node[rectangle,scale=0.8] {$B_2$};
        \filldraw[blue!50!white, draw=black, thick] (11.4,-1) rectangle (12.9,0.5);
        \draw (12.15,-0.25) node[rectangle,scale=0.8] {$B_3$};
        \draw (9.05,3.7) node[rectangle,scale=0.7] {$\alpha_1$};
        \draw (11.15,3.7) node[rectangle,scale=0.7] {$\alpha_2$};
        \draw (6.3,-1.25) node[rectangle,scale=0.7] {$\beta_2$};
        \draw (6.3,0.825) node[rectangle,scale=0.7] {$\beta_1$};

        \filldraw[red!50!white, draw=black, thick] (8.4,0.3) rectangle (9.6,-0.7);
        \draw (9,-0.2) node[rectangle,scale=0.8] {$R_1$};
        \filldraw[red!50!white, draw=black, thick] (10.7,-3) rectangle (12,-2.2);
        \draw (11.35,-2.6) node[rectangle,scale=0.8] {$R_2$};
        \filldraw[red!50!white, draw=black, thick] (9.7,-2) rectangle (10.5,-0.8);
        \draw (10.1,-1.4) node[rectangle,scale=0.8] {$R_3$};
        \draw (3.6,-3.7) node[rectangle,scale=0.8] {(a)};
        \draw (13.6,-3.7) node[rectangle,scale=0.8] {(b)};
    \end{tikzpicture}
    \caption{Examples of placement of red rectangles in Case 2}
    \label{fig:proofK333_3}
\end{figure}
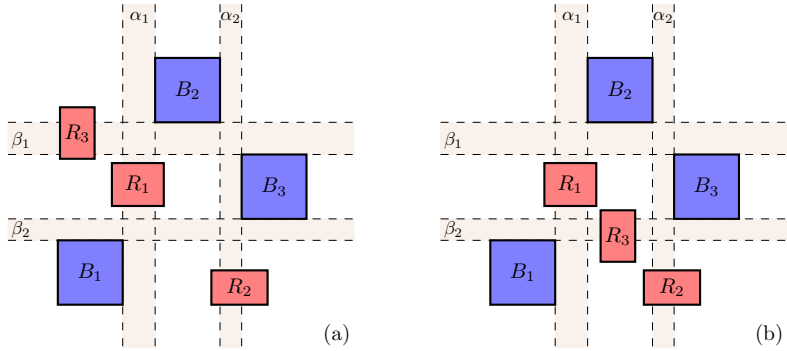
    \textbf{Case 2.}  Consider the second configuration of placement of blue rectangles. We will show that all red rectangles must be placed diagonally (similar to the first configuration of blue rectangles). Note that if there are two red rectangles that see $B_2$ horizontally, both of them must contain a horizontal strip adjacent to $B_2$. Since there is only one horizontal strip adjacent to $B_2$ and the same-color rectangles cannot share the same strip, this is impossible. Thus, there are at least two red rectangles, say $R_1$ and $R_2$, which see $B_2$ vertically. Without loss of generality, let $R_1$ contain only $\alpha_1$ and be between $B_1$ and $B_2$ seeing $B_3$ horizontally, and $R_2$ contain only $\alpha_2$ and be below $B_3$ seeing $B_1$ horizontally. Since both two vertical strips are already contained in $R_1$ and $R_2$, another red rectangle, $R_3$, must contain one horizontal strip (cannot contain both since $R_3$ cannot see $R_1$). Hence, there are two possible placements for $R_3$. One is astride $\beta_1$ above $B_1$ (see Figure~\ref{fig:proofK333_3}~(a)). Another is astride $\beta_2$ between $B_1$ and $B_3$ (see Figure~\ref{fig:proofK333_3}~(b)). Both of these lead to diagonal placement of red rectangles. Using a similar reason as for Case 1 with red rectangles instead of blue ones, we can conclude that it is also impossible to construct a representation of $K_{3,3,3}$ for this case.
\end{proof}

With Theorem~\ref{thm:bipartite} and Lemma~\ref{lem:K_{3,3,3}}, we are ready to prove Theorem~\ref{thm:otherK}. 

\begin{theorem}\label{thm:otherK}
    For $k \geq 3$, $K_{a_1,a_2,\dots,a_k}$ where $a_1 \leq a_2 \leq \dots \leq a_k$ is a TRVG if and only if $a_{k-1} \leq 2$ or $(a_{k-2},a_{k-1},a_k) \in \{(1,3,3),(1,3,4),(2,3,3),(2,3,4)\}$.
\end{theorem}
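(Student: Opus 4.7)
The plan is to prove both directions using Theorem~\ref{thm:bipartite} and Lemma~\ref{lem:K_{3,3,3}} for necessity, and explicit TRVG constructions for sufficiency. For \emph{necessity}, every induced subgraph of a TRVG is itself a TRVG (by restricting the representation), and the subgraph of $K_{a_1,\dots,a_k}$ induced by vertices from any union of parts is again complete multipartite. Applying Theorem~\ref{thm:bipartite} to every pair of parts forces $\min(a_i,a_j)\leq 2$ or $(a_i,a_j)\in\{(3,3),(3,4),(4,3)\}$, while Lemma~\ref{lem:K_{3,3,3}} applied to any three parts of size $\geq 3$ gives a contradiction. Hence at most two parts are of size $\geq 3$. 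If $a_{k-1}\leq 2$ we are in Case 1; otherwise $a_{k-1}\geq 3$ forces $a_{k-2}\leq 2$ and $(a_{k-1},a_k)\in\{(3,3),(3,4)\}$, giving the four listed triples of Case 2.

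For \emph{sufficiency}, every graph satisfying the hypothesis is an induced subgraph of $K_{\underbrace{2,\dots,2}_{k-1},a_k}$ (Case 1) or $K_{\underbrace{2,\dots,2}_{k-2},3,4}$ (Case 2), so it suffices to construct TRVG representations for these maximal families. For Case 1, I would use a cascading layout: for each $j=1,\dots,k-1$, introduce the pair
\[
x_j=[-2j+0.6,\,M]\times[-2j,\,-2j+0.5],\qquad y_j=[-2j,\,-2j+0.5]\times[-2j+0.6,\,M]
\]
for a sufficiently large constant $M$, together with $a_k$ small pairwise-disjoint squares $c_i$ placed on a diagonal staircase in the positive quadrant. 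A direct incidence check then yields: $x_j$ and $y_j$ are disjoint in both axes (intra-pair non-visibility); for $l\neq j$, $x_j$ and $y_l$ overlap in exactly one axis ($x$-overlap if $l<j$, $y$-overlap if $l>j$), giving visibility without intersection; each $c_i$ sees every strip through overlap in exactly one axis; and the $c_i$ are pairwise disjoint in both axes. For Case 2, begin with a TRVG representation of $K_{3,4}$ (guaranteed by Theorem~\ref{thm:bipartite}) in the positive quadrant, and wrap the Case 1 cascade of $k-2$ size-2 pairs around it; the strips' reach into the positive quadrant produces the needed visibility to every $K_{3,4}$ rectangle, while the strictly negative $y$-extent of each $x_j$ (respectively $x$-extent of each $y_j$) prevents any intersection.

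The main obstacle is discovering the Case 1 cascading layout. The asymmetric $0.6$-shift is essential so that $x_j$ and $y_j$ have neither $x$- nor $y$-overlap despite their perpendicular long extents, and the nested offsets $-2j$ guarantee that every two distinct pairs see each other on exactly one axis. Without this asymmetric cascade, distinct pairs collapse into failing to see each other on either axis. Adapting the cascade to Case 2 adds the further subtlety of coordinating with a given TRVG of $K_{3,4}$, which is handled cleanly by placing the two portions in disjoint quadrants.
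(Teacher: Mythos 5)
Your proposal is correct and follows essentially the same route as the paper: necessity via the non-TRVG $K_{3,3,3}$ (Lemma~\ref{lem:K_{3,3,3}}) together with the bipartite classification (Theorem~\ref{thm:bipartite}), and sufficiency by wrapping the parts of size at most $2$ around a bipartite core. Your explicit coordinate cascade is just an unrolled version of the paper's inductive bounding-box construction, in which each new part of size $1$ or $2$ is realized by one or two rectangles covering one or two sides of the bounding box of the representation built so far.
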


\begin{proof}
    Observe that for $K_{a_r,...,a_k}$ which is a TRVG, we can construct a representation of $K_{1,a_r,...,a_k}$ and $K_{2,a_r,...,a_k}$ using the idea of \textit{bounding box}, which is a box that covers all rectangles in a representation of a TRVG. Given a representation of $K_{a_r,...,a_k}$, we can add a rectangle that covers a side of the bounding box of $K_{a_r,...,a_k}$ to obtain a representation of $K_{1,a_r,...,a_k}$ (see Figure~\ref{fig:proofotherK_1}~(a)). Similarly, we can add two rectangles that do not see each other and cover two sides of the bounding box of $K_{a_r,...,a_k}$ to obtain a representation of $K_{2,a_r,...,a_k}$ (see Figure~\ref{fig:proofotherK_1}~(b)). 
    
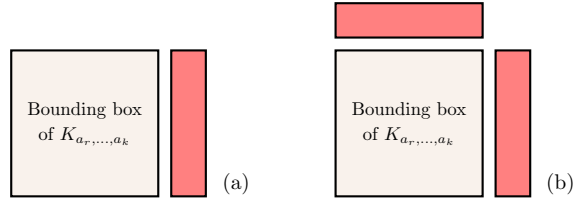
\begin{figure}[h]
    \centering
    \begin{tikzpicture}[scale = 0.2mm]
        \filldraw[brown!10!white, draw=black, thick] (-3.4,-3.4) rectangle (0,0);
        \draw (-1.7,-1.4) node[rectangle,scale=0.7] {Bounding box};
        \draw (-1.7,-2) node[rectangle,scale=0.7] {of $K_{a_r,...,a_k}$};
        \filldraw[red!50!white, draw=black, thick] (0.3,-3.4) rectangle (1.1,0);

        \filldraw[brown!10!white, draw=black, thick] (4.1,-3.4) rectangle (7.5,0);
        \draw (5.8,-1.4) node[rectangle,scale=0.7] {Bounding box};
        \draw (5.8,-2) node[rectangle,scale=0.7] {of $K_{a_r,...,a_k}$};
        \filldraw[red!50!white, draw=black, thick] (7.8,-3.4) rectangle (8.6,0);
        \filldraw[red!50!white, draw=black, thick] (4.1,0.3) rectangle (7.5,1.1);

        \draw (1.8,-3.1) node[rectangle,scale=0.8] {(a)};
        \draw (9.3,-3.1) node[rectangle,scale=0.8] {(b)};
    \end{tikzpicture}
    \caption{(a) A representation of $K_{1,a_r,...,a_k}$ and (b) a representation of $K_{2,a_r,...,a_k}$ constructed from a given representation of $K_{a_r,...,a_k}$}
    \label{fig:proofotherK_1}
\end{figure}

    If $a_{k-2} \geq 3$, we then have $3 \leq a_{k-1} \leq a_k$ which means that $K_{a_1,a_2,\dots,a_k}$ has $K_{3,3,3}$ as an induced subgraph. Note that an induced subgraph of a TRVG is also a TRVG. Hence, $K_{a_1,a_2,\dots,a_k}$ is a non-TRVG due to Lemma~\ref{lem:K_{3,3,3}}.

    On the other hand, if $a_{k-2} \leq 2$, then $a_1 \leq a_2 \leq \dots \leq a_{k-3} \leq 2$. Following the observation above, we can inductively construct a representation of $K_{a_1,a_2,\dots,a_k}$ from a representation of $K_{a_{k-1},a_k}$ if it is a TRVG (if it is not, we already know that $K_{a_1,a_2,\dots,a_k}$ is a non-TRVG since it contains $K_{a_{k-1},a_k}$ as an induced subgraph). Thus, $K_{a_1,a_2,\dots,a_k}$ is a TRVG if and only if $a_{k-2} \leq 2$ and $K_{a_{k-1},a_k}$ is a TRVG. According to Theorem~\ref{thm:bipartite}, the latter is equivalent to \say{$a_{k-1} \leq 2$ or $(a_{k-1},a_k) \in \{(3,3),(3,4)\}$}. Applying the fact that $a_{k-2} \leq a_{k-1}$, we therefore have $K_{a_1,a_2,\dots,a_k}$ is a TRVG if and only if $a_{k-1} \leq 2$ or $(a_{k-2},a_{k-1},a_k) \in \{(1,3,3),(1,3,4),(2,3,3),(2,3,4)\}$ as desired.
\end{proof}

 

\section{Complement of $C_n^2$}\label{complement}
The 2\textsuperscript{nd} power of a cycle, $C_n^2$, is the graph with $V(G) = \{v_1, \ldots, v_n\}$ where any two distinct vertices $v_i$ and $v_j$ are adjacent if $|i-j| \leq 2$ or $n - |i-j| \leq 2$. Its complement is denoted by $D_n^2$. See Figure \ref{fig:d15square} for an example of $D_{15}^2$. The results of Juntarapomdach and Kittipassorn \cite{CJTK} show that $D_n^2$ is a TRVG for $n \leq 9$ and they conjectured that $D_n^2$ is a non-TRVG for $n \geq 10$.

\begin{figure}[h]
\centering
\begin{tikzpicture}[
    scale=1.2,
    every label/.style={font=\small, inner sep=3pt}
]
  \def\n{15}
  \def\r{2}
  \def\vr{2pt}

  \foreach \i in {1,...,\n} {
    \coordinate (v\i) at ({90 - 360/\n * (\i-1)}:\r);
  }

  \foreach \i in {1,...,\n} {
    \foreach \j in {1,...,\n} {
      \ifnum\i<\j
        \pgfmathtruncatemacro{\d}{abs(\i-\j)}
        \pgfmathtruncatemacro{\dist}{min(\d, \n-\d)}
        \ifnum\dist>2
          \draw (v\i) -- (v\j);
        \fi
      \fi
    }
  }

  \foreach \i in {1,...,\n} {
    \filldraw (v\i) circle[radius=\vr]
      node[label={90 - 360/\n * (\i-1)}:\(v_{\i}\)] {};
  }

\end{tikzpicture}
\caption{The graph \(D_{15}^2\).}
\label{fig:d15square}
\end{figure}
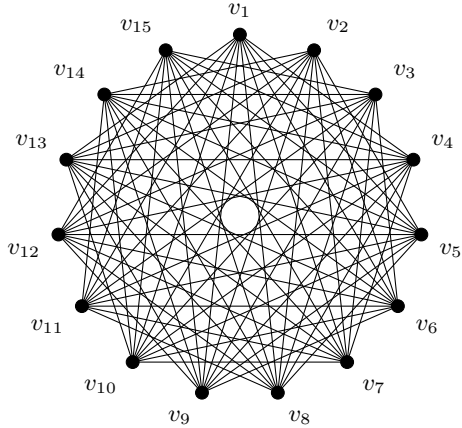

In this section, we prove Theorem \ref{thm:d2n} by using Lemma \ref{lem:K_{3,3,3}}.

\begin{theorem}\label{thm:d2n}
    $D^2_n$ is a non-TRVG when $n \geq 15$.
\end{theorem}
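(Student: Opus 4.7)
The plan is to reduce the claim to Lemma~\ref{lem:K_{3,3,3}} by exhibiting an induced copy of $K_{3,3,3}$ inside $D_n^2$ for every $n \geq 15$. Since rectangles in a TRVG are transparent, the restriction of any representation to a subset of vertices gives a valid representation of the induced subgraph; hence induced subgraphs of TRVGs are TRVGs, and it suffices to find the induced $K_{3,3,3}$.

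First I would analyze the structure of non-edges in $D_n^2$: two vertices $v_i, v_j$ are non-adjacent in $D_n^2$ iff their cyclic distance is at most $2$. Since any three pairwise non-adjacent vertices must all lie within cyclic distance $2$ of one another, a short case check shows that every independent triple in $D_n^2$ is of the form $\{v_i, v_{i+1}, v_{i+2}\}$ (indices mod $n$), i.e.\ three consecutive cycle positions.

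Next I would choose three such ``consecutive triples'' $A$, $B$, $C$ arranged around the cycle with gaps $g_1, g_2, g_3$ of unused vertices between them, satisfying $g_1 + g_2 + g_3 = n - 9$. The closest cross-pair between two neighbouring triples has cyclic distance exactly $g_i + 1$, so all cross-edges exist in $D_n^2$ iff $g_i \geq 2$ for each $i$. This is feasible precisely when $n - 9 \geq 6$, i.e.\ $n \geq 15$. Concretely, taking
\[
A = \{v_1, v_2, v_3\}, \quad B = \{v_6, v_7, v_8\}, \quad C = \{v_{11}, v_{12}, v_{13}\},
\]
with the remaining $n - 13 \geq 2$ vertices forming the third gap, one verifies that each pair within a triple has cyclic distance at most $2$ (so no edge in $D_n^2$), while each cross-pair has cyclic distance at least $3$ (so an edge in $D_n^2$). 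This makes $A \cup B \cup C$ an induced $K_{3,3,3}$.

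Finally I would conclude: if $D_n^2$ were a TRVG then so would its induced subgraph on $A \cup B \cup C \cong K_{3,3,3}$, contradicting Lemma~\ref{lem:K_{3,3,3}}. The step I expect to require the most care is the triple-classification — arguing that every independent triple in $D_n^2$ comes from three consecutive cycle positions, which pins down the gap inequality $g_i \geq 2$ and hence the threshold $n \geq 15$ — but this is a short case analysis, and the rest of the proof is bookkeeping.
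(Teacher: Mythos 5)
Your proposal is correct and follows essentially the same route as the paper: both take the induced subgraph on $\{v_1,v_2,v_3\}\cup\{v_6,v_7,v_8\}\cup\{v_{11},v_{12},v_{13}\}$, check that each triple is independent while all cross-pairs have cyclic distance at least $3$ (using $n\ge 15$ for the wrap-around pairs), and invoke Lemma~\ref{lem:K_{3,3,3}} together with the fact that induced subgraphs of TRVGs are TRVGs. The extra classification of all independent triples is not needed for the argument, but it does no harm.
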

\begin{proof}
    Assume that $n \geq 15$. Let $v_i$ be a vertex of $D^2_n$ for $i \in \{1,\ldots,n\}$ and let $G$ be an induced subgraph of $D^2_n$ such that $V(G) = \{v_1,v_2,v_3,v_6,v_7,v_8,v_{11},v_{12},v_{13}\}$. We will show that $G$ is $K_{3,3,3}$. By the definition of $D^2_n$, there are no edges between any two vertices of $\{v_1,v_2,v_3\}$. The same holds for $\{v_6,v_7,v_8\}$ and $\{v_{11},v_{12},v_{13}\}$. Hence, sets $\{v_1,v_2,v_3\}$, $\{v_6,v_7,v_8\}$ and $\{v_{11},v_{12},v_{13}\}$ are independent. 
    
    Next, let $j\in \{1,2,3\}$, $k\in \{6,7,8\}$ and $l\in \{11,12,13\}$. Since $k-j \geq 3$, $l-k \geq 3$ and $n+j-l \geq 15 + j - l \geq 3$, there are edges between any two vertices from different independent sets. Therefore, $G$ is $K_{3,3,3}$. By Lemma \ref{lem:K_{3,3,3}}, we conclude that $D^2_n$ is a non-TRVG since $K_{3,3,3}$, which is non-TRVG, is an induced subgraph.
\end{proof}

\section{$k$-partite graph}\label{k-partite}
In the study by Juntarapomdach and Kittipassorn \cite{CJTK}, they obtained an upper bound for the number of edges of a bipartite graph with $n$ vertices which is $2n-2$ edges.

In this section, we generalize this upper bound to a $k$-partite graph with $n$ vertices as stated in Theorem \ref{thm:bound}.

To distinguish between each vertex of different parts, we assume that their rectangle representations must have different colors. In other words, we assume that vertices in the \(i^{\text{th}}\) part are represented by rectangles of color \(i\). We recall the following lemma from \cite{CJTK}, which will be useful for the proof of Theorem \ref{thm:bound}.

\begin{lemma}\label{lem:bound}
    Let $G$ be a bipartite TRVG, where $\mathcal{G}$ and $\mathcal{R}$ are the partite sets of $G$, represented by p green rectangles $g_1,g_2,\ldots,g_p$ in $\mathcal{G}$ and q red rectangles $r_1,r_2,\ldots,r_q$ in $\mathcal{R}$. If $g_i$ sees $\alpha_i$ red rectangles horizontally for $i \in \{1,2,\ldots,p\}$, then
    \[
    q \geq \alpha_1 + \alpha_2+ \ldots + \alpha_p - (p-1).
    \]
\end{lemma}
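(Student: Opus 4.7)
The plan is to translate horizontal visibilities into interval-overlap relations on the $y$-axis, and then to exploit both sides of the bipartiteness assumption — the green side to linearly order the $y$-projections of the greens, and the red side to bound how many reds can straddle each gap between consecutive greens.

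First I would use that $G$ is bipartite: no two greens horizontally see each other, so the $y$-projections $Y_1,\ldots,Y_p$ of $g_1,\ldots,g_p$ are pairwise disjoint apart from possibly sharing endpoints. After relabelling, list them in order from bottom to top. For each red $r_j$, put $S_j = \{\, i : g_i \text{ sees } r_j \text{ horizontally}\,\}$. Since $r_j$'s $y$-projection is a single interval and the $Y_i$'s are pairwise disjoint, a connectedness argument shows that $S_j$ is a consecutive block of integers. Double counting the horizontal sightings then gives $\sum_{i=1}^p \alpha_i = \sum_{j=1}^q |S_j|$.

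Next I would do the bookkeeping: for every $j$ with $|S_j| \ge 1$ write $|S_j| = 1 + (|S_j|-1)$, so that $\sum_{j=1}^q |S_j| \le q + \sum_{j=1}^q (|S_j|-1)^+$. Since each $S_j$ is a consecutive block, $(|S_j|-1)^+$ counts exactly the number of consecutive pairs $(i,i+1)$ lying inside $S_j$. So the remaining task reduces to the following claim: for each $i \in \{1,\ldots,p-1\}$, at most one red rectangle contains both $i$ and $i+1$ in its block.

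For the claim: if two reds $r_j, r_{j'}$ each horizontally saw both $g_i$ and $g_{i+1}$, then each of their $y$-projections would meet both $Y_i$ and $Y_{i+1}$; being a single interval, each must contain any neighbourhood of the gap between $Y_i$ and $Y_{i+1}$. Hence the two reds have nontrivially overlapping $y$-projections, and since the rectangles are disjoint their $x$-projections are disjoint, so $r_j$ and $r_{j'}$ would horizontally see each other — contradicting bipartiteness, as both lie in $\mathcal{R}$. Summing over the $p-1$ values of $i$ gives $\sum_j (|S_j|-1)^+ \le p-1$, and combining with the previous estimate yields $q \ge \sum_i \alpha_i - (p-1)$.

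The main obstacle is the last step: it is the only place where the bipartiteness constraint on the red side is used, and the one nontrivial observation in the argument is that two reds forced to span the same green–green gap must horizontally see each other; the rest is routine double counting.
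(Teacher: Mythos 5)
Your proof is correct. Note that the paper itself gives no argument for this lemma --- it is stated as a recalled result from the cited work of Juntarapomdach and Kittipassorn --- so there is no in-paper proof to compare against; your write-up supplies a complete, self-contained justification. The key steps all hold: since no two green rectangles are adjacent, the interiors of their $y$-projections are pairwise disjoint and can be linearly ordered; each red's $y$-projection is an interval, so the set $S_j$ of greens it sees horizontally is a consecutive block; the double count $\sum_i \alpha_i = \sum_j |S_j|$ is immediate; and the crucial observation that two reds whose $y$-projections both meet the interiors of $Y_i$ and $Y_{i+1}$ must have overlapping $y$-projection interiors (hence see each other horizontally, contradicting that $\mathcal{R}$ is independent) correctly caps the number of reds spanning each of the $p-1$ gaps at one. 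One small remark: the aside that disjointness forces the two reds' $x$-projections to be disjoint is true but unnecessary --- for transparent visibility, overlapping interiors of the $y$-projections of two disjoint rectangles already yields a horizontal line of sight directly, with no obstruction to worry about.
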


This lemma can be applied to bound the number of vertices in the $i^{\text{th}}$ part, which depends on the number of rectangles of color $i$ that are seen by each rectangle of color $j$ and the number of vertices in the $j^{\text{th}}$ part.

\begin{theorem}\label{thm:bound}
    Every $k$-partite TRVG with $n$ vertices has at most $2(k-1)n-k(k-1)$ edges.
\end{theorem}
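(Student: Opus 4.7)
The plan is to reduce the $k$-partite bound to the bipartite case pair by pair. Let $V_1,\dots,V_k$ be the parts of the $k$-partite TRVG $G$ with $|V_i|=n_i$, so $n_1+\cdots+n_k=n$. For each pair $i<j$, the induced subgraph $G[V_i\cup V_j]$ is itself a TRVG, since one may simply discard the rectangles corresponding to the deleted vertices: because rectangles are transparent, this does not create or destroy any line of sight among the surviving rectangles. Since $V_i$ and $V_j$ are each independent sets in $G$, the subgraph $G[V_i\cup V_j]$ is a bipartite TRVG on $n_i+n_j$ vertices.

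Next I would invoke the bipartite edge bound. Applying Lemma~\ref{lem:bound} to horizontal sightlines in $G[V_i\cup V_j]$ shows that the number of horizontal edges is at most $(n_i+n_j)-1$, and the symmetric application to vertical sightlines gives the same bound for vertical edges. Adding these yields
\[
|E(V_i,V_j)| \;\leq\; 2(n_i+n_j)-2,
\]
the known bipartite TRVG bound.

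Finally I would sum this estimate over the $\binom{k}{2}$ pairs of parts. Since $E(G)=\bigcup_{i<j}E(V_i,V_j)$ and each $n_i$ appears in exactly $k-1$ pairs, we compute $\sum_{i<j}(n_i+n_j)=(k-1)n$, so
\[
|E(G)| \;\leq\; \sum_{i<j}\bigl(2(n_i+n_j)-2\bigr) \;=\; 2(k-1)n - 2\binom{k}{2} \;=\; 2(k-1)n-k(k-1),
\]
as required. There is no genuine obstacle here beyond bookkeeping: the content lies in the observation that induced bipartite pieces remain TRVGs (which requires transparency) and in the previously established bipartite bound. A sharper result would presumably require one to exploit joint visibilities across three or more color classes simultaneously, rather than handling each pair in isolation.
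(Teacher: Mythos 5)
Your proof is correct and follows essentially the same route as the paper: both arguments apply Lemma~\ref{lem:bound} to each of the $\binom{k}{2}$ pairs of parts, once horizontally and once vertically, and then sum; you merely package the pairwise step as the known bipartite bound $2(n_i+n_j)-2$ before summing, whereas the paper carries all the visibility counts through a single combined summation. The arithmetic $\sum_{i<j}(n_i+n_j)=(k-1)n$ and $2\binom{k}{2}=k(k-1)$ matches the paper's final rearrangement.
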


\begin{proof}
   Let $n_1, n_2, \ldots, n_k$ be the numbers of vertices in each part and for $l \in \{1, 2, \ldots, n_i\}$, let $x^{(i,j)}_l$ and $y^{(i,j)}_l$ be the numbers of rectangles in the $j^{\text{th}}$ part that can be seen by rectangle $l$ in the $i^{\text{th}}$ part, horizontally and vertically respectively.
   
   We will apply Lemma \ref{lem:bound} to every pair of parts, both horizontally and vertically. Firstly, we consider when $i = 1$ and $j>i$. By Lemma~\ref{lem:bound}, we have that
   \begin{align*}
n_2 &\geq x^{(1,2)}_1 + x^{(1,2)}_2 + \cdots + x^{(1,2)}_{n_1} - (n_1 - 1), \\
n_2 &\geq y^{(1,2)}_1 + y^{(1,2)}_2 + \cdots + y^{(1,2)}_{n_1} - (n_1 - 1), \\
&\vdots \nonumber\\
n_k &\geq x^{(1,k)}_1 + x^{(1,k)}_2 + \cdots + x^{(1,k)}_{n_1} - (n_1 - 1), \\
n_k &\geq y^{(1,k)}_1 + y^{(1,k)}_2 + \cdots + y^{(1,k)}_{n_1} - (n_1 - 1).
\end{align*}

We further consider the case when $i \geq 2$ and $j > i$. Again, by Lemma~\ref{lem:bound}, we obtain that
\begin{align*}
n_3 &\geq x^{(2,3)}_1 + x^{(2,3)}_2 + \cdots + x^{(2,3)}_{n_2} - (n_2 - 1), \\
n_3 &\geq y^{(2,3)}_1 + y^{(2,3)}_2 + \cdots + y^{(2,3)}_{n_2} - (n_2 - 1), \\
&\vdots \nonumber\\
n_k &\geq x^{(k-1,k)}_1 + x^{(k-1,k)}_2 + \cdots + x^{(k-1,k)}_{n_{k-1}} - (n_{k-1} - 1), \\
n_k &\geq y^{(k-1,k)}_1 + y^{(k-1,k)}_2 + \cdots + y^{(k-1,k)}_{n_{k-1}} - (n_{k-1} - 1).
\end{align*}

We then combine all equations together and obtain that
\[
2\sum_{i = 2}^k (i-1)n_i \geq\sum_{i=1}^{k-1} \sum_{j=i+1}^{k} \sum_{l=1}^{n_i} \left( x^{(i,j)}_l + y^{(i,j)}_l \right) -  2 \sum_{i=1}^ k (k-i)n_i +k(k-1).
\]
We can see that 
\[
\sum_{i=1}^{k-1} \sum_{j=i+1}^{k} \sum_{l=1}^{n_i} \left( x^{(i,j)}_l + y^{(i,j)}_l \right)
\] 
is equal to the number of edges in this graph, since it is the total number of pairs of rectangles seeing each other, and each pair contributes exactly one edge. Hence, we obtain that
\[
2\sum_{i = 2}^k (i-1)n_i \geq e(G) -  2 \sum_{i=1}^ k (k-i)n_i +k(k-1).
\]
By rearranging, we obtain that
\[
2 (k-1)n - k(k-1) \geq e(G). \qedhere 
\]
\end{proof}
\section{Intersecting Rectangle Visibility Graph}\label{irvg}
As we explore on the subject of Intersect Transparent Rectangle Visibility Graph (ITRVG), we see that the idea of ITRVG construction is relatively close to TRVG. On the one hand, from the definition, it follows immediately that any TRVG representation of a graph is also an ITRVG representation. Thus, any TRVG is an ITRVG. On the other hand, when we have an ITRVG, we do not know whether it is a TRVG or not. Initially, they seemed to be TRVGs as well since we have found only ITRVGs that can be transformed to a TRVG representation of that graph. However, because of the advantages of being intersected, we can find an example of an ITRVG that is proven to be a non-TRVG using the similar idea from Lemma \ref{lem:K_{3,3,3}} as follows. 

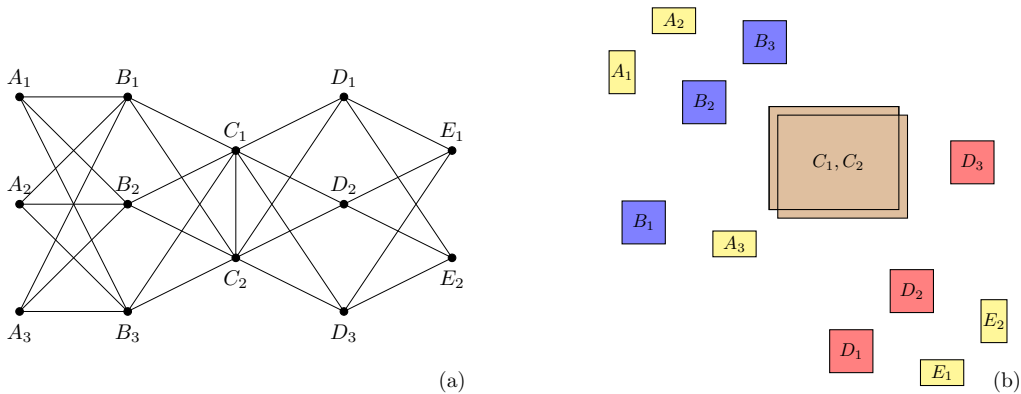
\begin{figure}[h]
    \centering
\begin{tikzpicture}[scale = 0.25mm]
    \filldraw[black] (-4,2) circle (2pt);
    \draw (-4,2.35) node[rectangle,scale=0.8] {$A_1$};    
    \filldraw[black] (-4,0) circle (2pt);
    \draw (-4,0.35) node[rectangle,scale=0.8] {$A_2$};
    \filldraw[black] (-4,-2) circle (2pt);
    \draw (-4,-2.4) node[rectangle,scale=0.8] {$A_3$};
    
    \draw[very thin] (-4,2) -- (-2,2);
    \draw[very thin] (-4,2) -- (-2,0);
    \draw[very thin] (-4,2) -- (-2,-2);
    \draw[very thin] (-4,0) -- (-2,2);
    \draw[very thin] (-4,0) -- (-2,0);
    \draw[very thin] (-4,0) -- (-2,-2);
    \draw[very thin] (-4,-2) -- (-2,2);
    \draw[very thin] (-4,-2) -- (-2,0);
    \draw[very thin] (-4,-2) -- (-2,-2);
    
    \filldraw[black] (-2,2) circle (2pt);
    \draw (-2,2.35) node[rectangle,scale=0.8] {$B_1$};
    \filldraw[black] (-2,0) circle (2pt);
    \draw (-2,0.35) node[rectangle,scale=0.8] {$B_2$};
    \filldraw[black] (-2,-2) circle (2pt);
    \draw (-2,-2.4) node[rectangle,scale=0.8] {$B_3$};

    \draw[very thin] (-2,2) -- (0,1);
    \draw[very thin] (-2,2) -- (0,-1);
    \draw[very thin] (-2,0) -- (0,1);
    \draw[very thin] (-2,0) -- (0,-1);
    \draw[very thin] (-2,-2) -- (0,1);
    \draw[very thin] (-2,-2) -- (0,-1);
    
    \filldraw[black] (0,1) circle (2pt);
    \draw (0,1.35) node[rectangle,scale=0.8] {$C_1$};
    \draw[very thin] (0,-1) -- (0,1);
    \filldraw[black] (0,-1) circle (2pt);
    \draw (0,-1.4) node[rectangle,scale=0.8] {$C_2$};

    \draw[very thin] (2,2) -- (0,1);
    \draw[very thin] (2,2) -- (0,-1);
    \draw[very thin] (2,0) -- (0,1);
    \draw[very thin] (2,0) -- (0,-1);
    \draw[very thin] (2,-2) -- (0,1);
    \draw[very thin] (2,-2) -- (0,-1);
    
    \filldraw[black] (2,2) circle (2pt);
    \draw (2,2.35) node[rectangle,scale=0.8] {$D_1$};
    \filldraw[black] (2,0) circle (2pt);
    \draw (2,0.35) node[rectangle,scale=0.8] {$D_2$};
    \filldraw[black] (2,-2) circle (2pt);
    \draw (2,-2.4) node[rectangle,scale=0.8] {$D_3$};

    \draw[very thin] (2,2) -- (4,1);
    \draw[very thin] (2,2) -- (4,-1);
    \draw[very thin] (2,0) -- (4,1);
    \draw[very thin] (2,0) -- (4,-1);
    \draw[very thin] (2,-2) -- (4,1);
    \draw[very thin] (2,-2) -- (4,-1);
    
    \filldraw[black] (4,1) circle (2pt);
    \draw (4,1.35) node[rectangle,scale=0.8] {$E_1$};
    \filldraw[black] (4,-1) circle (2pt);
    \draw (4,-1.4) node[rectangle,scale=0.8] {$E_2$};

    \draw[white] (-1,-2.5) rectangle (1,-3.3);
    \draw (4,-3.3) node[rectangle,scale=0.8] {(a)};
\end{tikzpicture}
\hspace{1.5cm}
\begin{tikzpicture}[scale = 0.2mm]
    \filldraw[brown!50!white, draw=black, thin] (-1.5,0.8) rectangle (1.5,-1.6);
    \filldraw[brown!50!white, draw=black, thin] (-1.3,0.6) rectangle (1.7,-1.8);
    \draw (0.1,-0.5) node[rectangle,scale=0.7] {$C_1,C_2$};
    \draw[thin] (-1.5,0.8) rectangle (1.5,-1.6);

    \filldraw[blue!50!white, draw=black, thin] (-2.1,1.8) rectangle (-1.1,2.8);
    \draw (-1.6,2.3) node[rectangle,scale=0.7] {$B_3$};
    \filldraw[blue!50!white, draw=black, thin] (-3.5,0.4) rectangle (-2.5,1.4);
    \draw (-3,0.9) node[rectangle,scale=0.7] {$B_2$};
    \filldraw[blue!50!white, draw=black, thin] (-4.9,-2.4) rectangle (-3.9,-1.4);
    \draw (-4.4,-1.9) node[rectangle,scale=0.7] {$B_1$};

    \filldraw[yellow!50!white, draw=black, thin] (-2.8,-2.7) rectangle (-1.8,-2.1);
    \draw (-2.3,-2.4) node[rectangle,scale=0.7] {$A_3$};
    \filldraw[yellow!50!white, draw=black, thin] (-5.2,1.1) rectangle (-4.6,2.1);
    \draw (-4.9,1.6) node[rectangle,scale=0.7] {$A_1$};
    \filldraw[yellow!50!white, draw=black, thin] (-4.2,2.5) rectangle (-3.2,3.1);
    \draw (-3.7,2.8) node[rectangle,scale=0.7] {$A_2$};

    \filldraw[red!50!white, draw=black, thin] (2.7,-1) rectangle (3.7,0);
    \draw (3.2,-0.5) node[rectangle,scale=0.7] {$D_3$};
    \filldraw[red!50!white, draw=black, thin] (1.3,-4) rectangle (2.3,-3);
    \draw (1.8,-3.5) node[rectangle,scale=0.7] {$D_2$};
    \filldraw[red!50!white, draw=black, thin] (-0.1,-5.4) rectangle (0.9,-4.4);
    \draw (0.4,-4.9) node[rectangle,scale=0.7] {$D_1$};

    \filldraw[yellow!50!white, draw=black, thin] (2,-5.7) rectangle (3,-5.1);
    \draw (2.5,-5.4) node[rectangle,scale=0.7] {$E_1$};
    \filldraw[yellow!50!white, draw=black, thin] (3.4,-4.7) rectangle (4,-3.7);
    \draw (3.7,-4.2) node[rectangle,scale=0.7] {$E_2$};
    \draw (4,-5.6) node[rectangle,scale=0.8] {(b)};
\end{tikzpicture}
    \caption{(a) Graph $G$ and (b) an ITRVG representation of $G$}
    \label{fig:ITRVGexam}
\end{figure}

\begin{lemma}
    The graph G in Figure \ref{fig:ITRVGexam} is an ITRVG; however, it is a non-TRVG.
\end{lemma}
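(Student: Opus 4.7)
My plan has two parts, corresponding to the two claims in the lemma.

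For the ITRVG half, I take the representation shown in Figure~\ref{fig:ITRVGexam}(b) as the witness. Since the ITRVG definition permits rectangles to overlap, the intersection between the $C_1$- and $C_2$-rectangles is allowed; I would only need to verify mechanically that every horizontal or vertical line of sight in the picture corresponds to an edge of $G$ and that no non-edge is witnessed by such a line. This is a finite and routine check.

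For the non-TRVG half, I would argue by contradiction, following the blueprint of Lemma~\ref{lem:K_{3,3,3}}. Suppose $G$ has a TRVG representation. Since $C_1 C_2$ is an edge, the rectangles $C_1$ and $C_2$ see each other; up to rotation I may assume this sight line is horizontal, with $C_1$ to the left of $C_2$. Each of the six rectangles $B_1, B_2, B_3, D_1, D_2, D_3$ must then see both $C_1$ and $C_2$. Adapting the strip vocabulary of Lemma~\ref{lem:K_{3,3,3}}, I would identify the vertical strip in the $x$-gap between $C_1$ and $C_2$ and the horizontal strips sitting in the vertical overlap of $C_1$ and $C_2$, and classify each of the six rectangles according to which of these strips it contains and along which axes it sees each $C$-rectangle.

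An axis-pigeonhole on the six rectangles, combined with the independence of $\{B_1, B_2, B_3\}$ and $\{D_1, D_2, D_3\}$ and the absence of $B$--$D$ visibilities, should cut the placement possibilities down to a short list of configurations. I would then bring in the $K_{3,3}$ structure between the $A$'s and $B$'s, and the $K_{3,2}$ structure between the $D$'s and $E$'s, to force the $B$- and $D$-rectangles into near-diagonal arrangements analogous to the forced placement of the three blue rectangles in the Lemma. In each surviving configuration, the same type of strip-and-axis reasoning used in Cases~1.1, 1.2 and 2 of the Lemma's proof would produce a contradiction: either two rectangles whose vertices are non-adjacent in $G$ are forced into mutual visibility, or some required visibility is impossible.

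The main obstacle is the case analysis. The setting here is intrinsically less symmetric than $K_{3,3,3}$: the pair $(C_1, C_2)$ plays the role that the three blue rectangles played in the Lemma, but it is a pair rather than a triple, so its forced placement has different symmetries; moreover the $B$'s and $D$'s are asymmetric via their respective ties to the $A$'s and $E$'s. I would organise the enumeration hierarchically, first by the axis-pattern of the $B$'s with respect to $(C_1, C_2)$, then by that of the $D$'s, and finally by the auxiliary constraints imposed by the $A$- and $E$-rectangles, to keep the analysis tractable.
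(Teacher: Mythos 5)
Your ITRVG half matches the paper: the representation in Figure~\ref{fig:ITRVGexam}(b) is the witness and the verification is a finite check. The non-TRVG half, however, has a structural problem beyond being only a plan. You anchor the argument on the pair $(C_1,C_2)$ and propose to classify the six rectangles $B_1,B_2,B_3,D_1,D_2,D_3$ by which strips of the $C$-configuration they contain. But the engine of Lemma~\ref{lem:K_{3,3,3}} --- ``a rectangle must contain at least one strip in order to see all three target rectangles'' --- is specifically a statement about \emph{three} pairwise non-adjacent targets: a rectangle containing no strip sees at most one target vertically and one horizontally, which is only a contradiction when there are three or more targets. With only two $C$-rectangles, a $B$- or $D$-rectangle can see $C_1$ vertically and $C_2$ horizontally while containing no strip of the $C$-pair at all, so your axis-pigeonhole does not force strip containment and the placement possibilities do not collapse to a short list. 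The forcing machinery you are importing simply does not fire from this anchor.

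The paper anchors instead on the $B$-triple: it takes the induced subgraph on $\{A_1,A_2,A_3,B_1,B_2,B_3,C_1,C_2\}$, observes that $\{A_1\}$, $\{A_2\}$, $\{A_3\}$, $\{C_1,C_2\}$ are four mutually non-seeing sets each of whose members must see all three $B$-rectangles, and since there are exactly four strips of the $B$-configuration, each set occupies exactly one. This pins down three possible configurations. The contradiction then comes from a further chain you do not anticipate: one $C$-rectangle is forced to be entirely vertically covered by a $B$-rectangle, hence all $D$-rectangles must see it horizontally and at least two are entirely horizontally covered by it; the $E$-rectangles must then see the $D$-rectangles through vertical strips of the $D$-configuration, producing an $E$-rectangle that sees an entirely covered $D$-rectangle horizontally and therefore sees the $C$-rectangle, a forbidden adjacency. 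To repair your proposal you would need to switch the anchor to the $B$'s (where the three-target strip lemma applies) and supply the covering argument; as written, the ``short list of configurations'' you hope for does not materialize.
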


\begin{proof}
    In this proof, in addition to the usual naming of each rectangle, we will denote the type of rectangles by the letter of the nodes they represent; for example, the rectangles representing the nodes $A_1$, $A_2$, and $A_3$ will be called $A$-rectangles. As graph $G$ has an ITRVG representation shown in Figure \ref{fig:ITRVGexam} (b), it is an ITRVG. To show that $G$ is a non-TRVG, let $G'$ be the induced subgraph of $G$ such that $V(G') = \{A_1,A_2,A_3,B_1,B_2,B_3,C_1,C_2\}$. As discussed in the proof of Lemma \ref{lem:K_{3,3,3}}, there are two possible ways to place all $B$-rectangles, up to translation, rotation, and reflection (see Figure \ref{fig:proofK333_1}).

    We then consider the possible placement of rectangles representing the other nodes of $G'$ using the idea and properties of strips in the proof of Lemma \ref{lem:K_{3,3,3}}. Note that there are four strips as well as four sets of rectangles that every rectangle in each set must not see any rectangles in the other sets, namely $\{A_1\}$, $\{A_2\}$, $\{A_3\}$, and $\{C_1,C_2\}$, while having to see all $B_1, B_2$, and $B_3$ rectangles. Thus, rectangles from each set must contain exactly one strip, different from the strips of the other sets. Without loss of generality, this leads to only three possible representations of $G'$ as shown in Figure \ref{fig:TRVG_of_G'}.

\begin{figure}[h]
    \centering
    \begin{tikzpicture}[scale = 0.195mm]
        \filldraw[brown!10!white] (-6.35,-4) rectangle (-5.6,4);
        \filldraw[brown!10!white] (-4.1,-4) rectangle (-3.6,4);
        \filldraw[brown!10!white] (-9,-1.5) rectangle (-1,-0.75);
        \filldraw[brown!10!white] (-9,0.75) rectangle (-1,1.25);
        
        \draw[dashed, very thin] (-6.35,4) -- (-6.35,-4);
        \draw[dashed, very thin] (-5.6,4) -- (-5.6,-4);

        \draw[dashed, very thin] (-4.1,4) -- (-4.1,-4);
        \draw[dashed, very thin] (-3.6,4) -- (-3.6,-4);

        \draw[dashed, very thin] (-9,-1.5) -- (-1,-1.5);
        \draw[dashed, very thin] (-9,-0.75) -- (-1,-0.75);

        \draw[dashed, very thin] (-9,0.75) -- (-1,0.75);
        \draw[dashed, very thin] (-9,1.25) -- (-1,1.25);

        \filldraw[blue!50!white, draw=black, thick] (-7.85,-3) rectangle (-6.35,-1.5);
        \draw (-7.1,-2.25) node[rectangle,scale=0.8] {$B_1$};
        \filldraw[blue!50!white, draw=black, thick] (-5.6,-0.75) rectangle (-4.1,0.75);
        \draw (-4.85,0) node[rectangle,scale=0.8] {$B_2$};
        \filldraw[blue!50!white, draw=black, thick] (-3.6,1.25) rectangle (-2.1,2.75);
        \draw (-2.85,2) node[rectangle,scale=0.8] {$B_3$};
        \draw (-5.95,3.7) node[rectangle,scale=0.7] {$\alpha_1$};
        \draw (-3.85,3.7) node[rectangle,scale=0.7] {$\alpha_2$};
        \draw (-8.7,-1.125) node[rectangle,scale=0.7] {$\beta_2$};
        \draw (-8.7,1) node[rectangle,scale=0.7] {$\beta_1$};

        \filldraw[yellow!50!white, draw=black, thick] (-6.6,2.2) rectangle (-5.1,3);
        \draw (-5.85,2.6) node[rectangle,scale=0.8] {$A_2$};
        \filldraw[yellow!50!white, draw=black, thick] (-4.95,-3.6) rectangle (-3.45,-2.8);
        \draw (-4.2,-3.2) node[rectangle,scale=0.8] {$A_3$};
        \filldraw[yellow!50!white, draw=black, thick] (-8,0.4) rectangle (-7.2,1.9);
        \draw (-7.6,1.15) node[rectangle,scale=0.8] {$A_1$};

        \filldraw[brown!50!white, draw=black, thick] (-3.3,-2.5) rectangle (-2.5,-0.5);
        \draw (-2.9,-1.5) node[rectangle,scale=0.8] {$C_1$};
        \filldraw[brown!50!white, draw=black, thick] (-2.3,-2) rectangle (-1.5,0.1);
        \draw (-1.9,-0.95) node[rectangle,scale=0.8] {$C_2$};
        \draw (-1.4,-3.7) node[rectangle,scale=0.8] {(a)};
    \end{tikzpicture}
    \hspace{0.2cm}
    \begin{tikzpicture}[scale = 0.195mm]
        \filldraw[brown!10!white] (3.65,-4) rectangle (4.4,4);
        \filldraw[brown!10!white] (5.9,-4) rectangle (6.4,4);
        \filldraw[brown!10!white] (1,-1.5) rectangle (9,-1);
        \filldraw[brown!10!white] (1,0.5) rectangle (9,1.25);
        
        \draw[dashed, very thin] (3.65,4) -- (3.65,-4);
        \draw[dashed, very thin] (4.4,4) -- (4.4,-4);

        \draw[dashed, very thin] (5.9,4) -- (5.9,-4);
        \draw[dashed, very thin] (6.4,4) -- (6.4,-4);

        \draw[dashed, very thin] (1,-1.5) -- (9,-1.5);
        \draw[dashed, very thin] (1,-1) -- (9,-1);

        \draw[dashed, very thin] (1,0.5) -- (9,0.5);
        \draw[dashed, very thin] (1,1.25) -- (9,1.25);

        \filldraw[blue!50!white, draw=black, thick] (2.15,-3) rectangle (3.65,-1.5);
        \draw (2.9,-2.25) node[rectangle,scale=0.8] {$B_1$};
        \filldraw[blue!50!white, draw=black, thick] (4.4,1.25) rectangle (5.9,2.75);
        \draw (5.15,2) node[rectangle,scale=0.8] {$B_2$};
        \filldraw[blue!50!white, draw=black, thick] (6.4,-1) rectangle (7.9,0.5);
        \draw (7.15,-0.25) node[rectangle,scale=0.8] {$B_3$};
        \draw (4.05,3.7) node[rectangle,scale=0.7] {$\alpha_1$};
        \draw (6.15,3.7) node[rectangle,scale=0.7] {$\alpha_2$};
        \draw (1.3,-1.25) node[rectangle,scale=0.7] {$\beta_2$};
        \draw (1.3,0.825) node[rectangle,scale=0.7] {$\beta_1$};

        \filldraw[yellow!50!white, draw=black, thick] (2,0.4) rectangle (2.8,1.9);
        \draw (2.4,1.15) node[rectangle,scale=0.8] {$A_1$};
        \filldraw[yellow!50!white, draw=black, thick] (3,0.3) rectangle (4.5,-0.4);
        \draw (3.75,-0.05) node[rectangle,scale=0.8] {$A_2$};
        \filldraw[yellow!50!white, draw=black, thick] (5.8,-3.3) rectangle (7.3,-2.5);
        \draw (6.55,-2.9) node[rectangle,scale=0.8] {$A_3$};
        
        \filldraw[brown!50!white, draw=black, thick] (4.6,-2.2) rectangle (5.1,-0.5);
        \draw (4.875,-1.35) node[rectangle,scale=0.6] {$C_1$};
        \filldraw[brown!50!white, draw=black, thick] (5.2,-2) rectangle (5.7,-0.7);
        \draw (5.45,-1.35) node[rectangle,scale=0.6] {$C_2$};
        \draw (8.6,-3.7) node[rectangle,scale=0.8] {(b)};
    \end{tikzpicture}
    \hspace{0.2cm}
    \begin{tikzpicture}[scale = 0.195mm]
        \filldraw[brown!10!white] (3.65,-4) rectangle (4.4,4);
        \filldraw[brown!10!white] (5.9,-4) rectangle (6.4,4);
        \filldraw[brown!10!white] (1,-1.5) rectangle (9,-1);
        \filldraw[brown!10!white] (1,0.5) rectangle (9,1.25);
        
        \draw[dashed, very thin] (3.65,4) -- (3.65,-4);
        \draw[dashed, very thin] (4.4,4) -- (4.4,-4);

        \draw[dashed, very thin] (5.9,4) -- (5.9,-4);
        \draw[dashed, very thin] (6.4,4) -- (6.4,-4);

        \draw[dashed, very thin] (1,-1.5) -- (9,-1.5);
        \draw[dashed, very thin] (1,-1) -- (9,-1);

        \draw[dashed, very thin] (1,0.5) -- (9,0.5);
        \draw[dashed, very thin] (1,1.25) -- (9,1.25);

        \filldraw[blue!50!white, draw=black, thick] (2.15,-3) rectangle (3.65,-1.5);
        \draw (2.9,-2.25) node[rectangle,scale=0.8] {$B_1$};
        \filldraw[blue!50!white, draw=black, thick] (4.4,1.25) rectangle (5.9,2.75);
        \draw (5.15,2) node[rectangle,scale=0.8] {$B_2$};
        \filldraw[blue!50!white, draw=black, thick] (6.4,-1) rectangle (7.9,0.5);
        \draw (7.15,-0.25) node[rectangle,scale=0.8] {$B_3$};
        \draw (4.05,3.7) node[rectangle,scale=0.7] {$\alpha_1$};
        \draw (6.15,3.7) node[rectangle,scale=0.7] {$\alpha_2$};
        \draw (1.3,-1.25) node[rectangle,scale=0.7] {$\beta_2$};
        \draw (1.3,0.825) node[rectangle,scale=0.7] {$\beta_1$};

        \filldraw[yellow!50!white, draw=black, thick] (4.9,-0.8) rectangle (5.7,-2.3);
        \draw (5.3,-1.55) node[rectangle,scale=0.8] {$A_2$};
        \filldraw[yellow!50!white, draw=black, thick] (3.55,0.2) rectangle (4.8,-0.6);
        \draw (4.18,-0.2) node[rectangle,scale=0.8] {$A_1$};
        \filldraw[yellow!50!white, draw=black, thick] (5.8,-3.3) rectangle (7.3,-2.5);
        \draw (6.55,-2.9) node[rectangle,scale=0.8] {$A_3$};
        
        \filldraw[brown!50!white, draw=black, thick] (1.6,1.9) rectangle (2.4,0.4);
        \draw (2,1.15) node[rectangle,scale=0.8] {$C_2$};
        \filldraw[brown!50!white, draw=black, thick] (2.6,3) rectangle (3.4,0.3);
        \draw (3.03,1.65) node[rectangle,scale=0.8] {$C_1$};
        \draw (8.6,-3.7) node[rectangle,scale=0.8] {(c)};
    \end{tikzpicture}
    \caption{Examples of three possible representations of $G'$}
    \label{fig:TRVG_of_G'}
\end{figure}
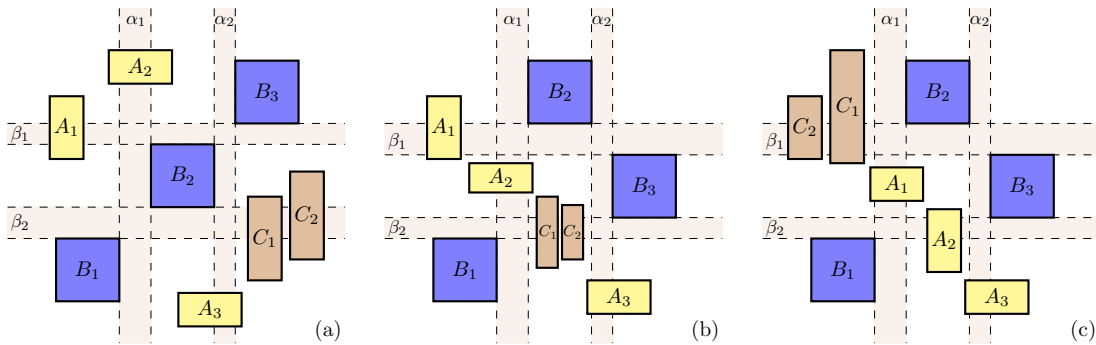

Next, we consider the placements of $D$-rectangles and $E$-rectangles. This is done using the concept of being entirely covered by another rectangle. A rectangle $A$ is \textit{entirely vertically covered} by a rectangle $B$ if any vertical line intersecting the interior of $A$ intersects the interior of $B$. If $A$ sees $B$ vertically, but the condition is not met, we will call that $A$ is \textit{partially vertically covered} by $B$. The entire and partial horizontal covering are defined similarly. With this definition, we present the following claim.

\begin{claim}\label{claim:ITRVG}
    Regarding the representations shown in Figure \ref{fig:TRVG_of_G'}, there is a $C$-rectangle that sees all $D$-rectangles horizontally. Moreover, at least two of these $D$-rectangles are entirely horizontally covered by this $C$-rectangle.
\end{claim}
\begin{proof}
    Since the rectangles representing $C_1$ and $C_2$ contain the same horizontal strip and this strip only, they see each other horizontally and see a $B$-rectangle, say $B'$, vertically. Note that any $B$-rectangle has at least one side adjacent to a vertical strip which is already contained in an $A$-rectangle. It means that the interior of any $C$-rectangle cannot intersect with this vertical strip; otherwise it would see an $A$-rectangle. Thus, there is at most one way for $C$-rectangles to be partially vertically covered by $B'$ \textemdash\ by having the interior extended beyond the side of $B$ that is not adjacent to a vertical strip. Hence, if both $C$-rectangles were partially vertically covered by $B'$, they must share a common vertical line of sight passing the right side of $B'$. Therefore, they would see each other vertically, which is impossible because they already see each other horizontally. It follows that one of them, say $C_1$, must be entirely vertically covered by $B'$. Now, observe that if a rectangle sees $C_1$ vertically, then it also sees $B'$. This fact forces all $D$-rectangles to see $C_1$ horizontally since each $D$-rectangle must see $C_1$ while it must not see any $B$-rectangle. This proves the first part of the claim.

    For the latter part, we shall consider $C_1$ which is entirely vertically covered by $B'$ in each case. In the first case of possible representation of $G'$ (Figure \ref{fig:TRVG_of_G'} (a)), we can see that $C_1$ could not have its top side higher than the top side $B_2$ as it would see $A_1$ which is prohibited. Likewise, the bottom side of $C_1$ could not be lower than the bottom side of $B_1$, otherwise it would see $A_3$. Thus, in order not to see $B$-rectangles, each $D$-rectangle must lie within the strip $\beta_2$ which is contained in $C_1$. Hence, all $D$-rectangles are entirely horizontally covered by~$C_1$.

    The second case (Figure \ref{fig:TRVG_of_G'} (b)) can be reasoned similarly to the first case. 

    Now consider the third case (Figure \ref{fig:TRVG_of_G'} (c)). As $C_1$ could have its top side higher than the top side of $B_2$, while its bottom side must not be lower than the bottom side of $B_3$, the possible area of $D$-rectangle placement are within $\beta_1$ or higher than the top side of $B_2$. Note that with these possible locations, a $D$-rectangle partially horizontally covered by $C_1$ must have a horizontal line of sight passing the top side of $C_1$. This allows only one $D$-rectangle to be partially horizontally covered by $C_1$ since they must not see each other. Hence, the rest of them, at least two $D$-rectangles, are entirely horizontally covered by $C_1$.
\end{proof}

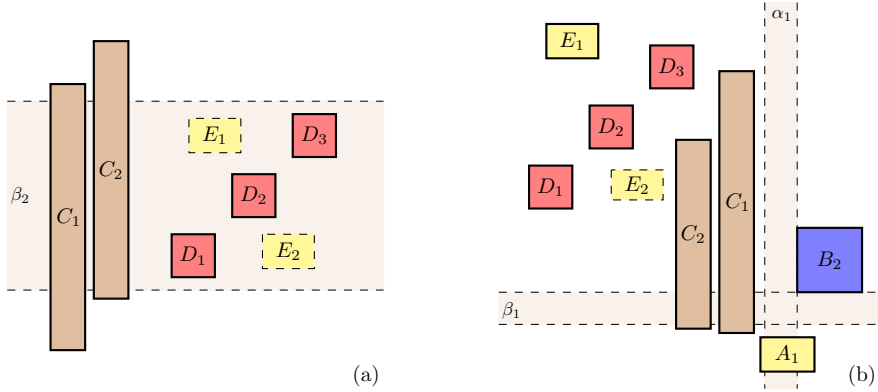
\begin{figure}[h]
    \centering
    \begin{tikzpicture}[scale = 0.2mm]
        \filldraw[brown!10!white] (-2.4,1.8) rectangle (6.3,6.2);

        \draw[dashed, very thin] (-2.4,1.8) -- (6.3,1.8);
        \draw[dashed, very thin] (-2.4,6.2) -- (6.3,6.2);

        \draw (-2.1,4) node[rectangle,scale=0.7] {$\beta_2$};

        \filldraw[red!50!white, draw=black, thick] (4.2,5.9) rectangle (5.2,4.9);
        \draw (4.7,5.4) node[rectangle,scale=0.8] {$D_3$};
        \filldraw[red!50!white, draw=black, thick] (2.8,4.5) rectangle (3.8,3.5);
        \draw (3.3,4) node[rectangle,scale=0.8] {$D_2$};
        \filldraw[red!50!white, draw=black, thick] (1.4,3.1) rectangle (2.4,2.1);
        \draw (1.9,2.6) node[rectangle,scale=0.8] {$D_1$};
        
        \filldraw[brown!50!white, draw=black, thick] (-0.4,7.6) rectangle (0.4,1.6);
        \draw (0,4.6) node[rectangle,scale=0.8] {$C_2$};
        \filldraw[brown!50!white, draw=black, thick] (-1.4,6.6) rectangle (-0.6,0.4);
        \draw (-0.975,3.5) node[rectangle,scale=0.8] {$C_1$};

        \filldraw[yellow!50!white, draw=black, dashed] (3.5,3.1) rectangle (4.7,2.3);
        \draw (4.1,2.7) node[rectangle,scale=0.8] {$E_2$};
        \filldraw[yellow!50!white, draw=black, dashed] (3,5.8) rectangle (1.8,5);
        \draw (2.4,5.4) node[rectangle,scale=0.8] {$E_1$};

        \draw (5.9,-0.2) node[rectangle,scale=0.8] {(a)};
    \end{tikzpicture}
    \hspace{1.2cm}
    \begin{tikzpicture}[scale = 0.2mm]
        \filldraw[brown!10!white] (3.65,-1) rectangle (4.4,8);
        \filldraw[brown!10!white] (-2.5,0.5) rectangle (6.3,1.25);
        
        \draw[dashed, very thin] (3.65,8) -- (3.65,-1);
        \draw[dashed, very thin] (4.4,8) -- (4.4,-1);

        \draw[dashed, very thin] (-2.5,0.5) -- (6.3,0.5);
        \draw[dashed, very thin] (-2.5,1.25) -- (6.3,1.25);

        \filldraw[blue!50!white, draw=black, thick] (4.4,1.25) rectangle (5.9,2.75);
        \draw (5.15,2) node[rectangle,scale=0.8] {$B_2$};
        \draw (4.05,7.7) node[rectangle,scale=0.7] {$\alpha_1$};
        \draw (-2.2,0.825) node[rectangle,scale=0.7] {$\beta_1$};

        \filldraw[red!50!white, draw=black, thick] (1,7) rectangle (2,6);
        \draw (1.5,6.5) node[rectangle,scale=0.8] {$D_3$};
        \filldraw[red!50!white, draw=black, thick] (-0.4,5.6) rectangle (0.6,4.6);
        \draw (0.1,5.1) node[rectangle,scale=0.8] {$D_2$};
        \filldraw[red!50!white, draw=black, thick] (-1.8,4.2) rectangle (-0.8,3.2);
        \draw (-1.3,3.7) node[rectangle,scale=0.8] {$D_1$};

        \filldraw[yellow!50!white, draw=black, thick] (3.55,0.2) rectangle (4.8,-0.6);
        \draw (4.18,-0.2) node[rectangle,scale=0.8] {$A_1$};
        
        \filldraw[brown!50!white, draw=black, thick] (1.6,4.8) rectangle (2.4,0.4);
        \draw (2,2.6) node[rectangle,scale=0.8] {$C_2$};
        \filldraw[brown!50!white, draw=black, thick] (2.6,6.4) rectangle (3.4,0.3);
        \draw (3.03,3.35) node[rectangle,scale=0.8] {$C_1$};

        \filldraw[yellow!50!white, draw=black, dashed] (0.1,4.1) rectangle (1.3,3.4);
        \draw (0.7,3.75) node[rectangle,scale=0.8] {$E_2$};
        \filldraw[yellow!50!white, draw=black, thick] (-0.2,7.5) rectangle (-1.4,6.7);
        \draw (-0.8,7.1) node[rectangle,scale=0.8] {$E_1$};

        \draw (5.9,-0.7) node[rectangle,scale=0.8] {(b)};
    \end{tikzpicture}
    \caption{Examples of placement of $D$-rectangles and $E$-rectangle that results in $C_1$ seeing some $E$-rectangles (shown as the dashed ones) horizontally: (a) for the cases of Figure \ref{fig:TRVG_of_G'} (a) and (b), and (b) for the case of Figure \ref{fig:TRVG_of_G'} (c)}
    \label{fig:D&E-rectangles}
\end{figure}

Next, we consider the strips formed by $D$-rectangles (whose placement are considered analogously to those of $B$-rectangles). As a $C$-rectangle see all $D$-rectangles horizontally, both horizontal strips are contained by that $C$-rectangle. This means that each $E$-rectangle, in order to see all $D$-rectangle and not see this $C$-rectangle, contains exactly one vertical strip, different from the other. Regarding the properties of the strips, each $E$-rectangle will see one $D$-rectangle horizontally, different from the other. Hence, we now have two $D$-rectangle that are seen by $E$-rectangles horizontally. Together with the latter part of Claim \ref{claim:ITRVG}, we can conclude that there is at least one $D$-rectangle that is seen by an $E$-rectangle while being entirely horizontally covered by a $C$-rectangle. This leads to this $E$-rectangle being seen by a $C$-rectangle (for example, see Figure \ref{fig:D&E-rectangles}) which is forbidden. Hence, it is impossible to construct a TRVG representation of $G$; therefore, $G$ is a non-TRVG.
\end{proof}

As each TRVG representation can be considered as an ITRVG representation of a graph, while there is an example of an ITRVG that is not a TRVG, we finally obtain the following theorem.
\begin{theorem}
    The class of TRVGs is a proper subclass of the class of ITRVGs.
\end{theorem}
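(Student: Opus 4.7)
The plan is to assemble this theorem directly from what has already been established, so the proof is essentially a two-line corollary of the preceding lemma together with a trivial containment.

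First, I would record the easy inclusion: every TRVG is an ITRVG. The justification is purely definitional. In a TRVG representation the rectangles are disjoint and vision passes through other rectangles; in an ITRVG representation rectangles are additionally allowed (but not required) to intersect. Hence any valid TRVG representation of a graph $H$ is \emph{a fortiori} a valid ITRVG representation of $H$, so the class inclusion $\text{TRVG} \subseteq \text{ITRVG}$ holds.

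Second, I would invoke the previous lemma to witness strictness. That lemma exhibits a specific graph $G$ (depicted in Figure \ref{fig:ITRVGexam}) together with an explicit ITRVG representation, and also proves that $G$ admits no TRVG representation. Thus $G \in \text{ITRVG} \setminus \text{TRVG}$, which upgrades the inclusion to a proper one: $\text{TRVG} \subsetneq \text{ITRVG}$.

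There is no real obstacle here, since the substantive content was already discharged in the proof that $G$ is non-TRVG. The only thing to be mindful of is stating cleanly why a TRVG representation qualifies as an ITRVG representation, so that the first inclusion is not left implicit. Once these two sentences are written, the theorem follows immediately and the proof can be closed.
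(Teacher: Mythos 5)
Your proposal is correct and matches the paper's own argument exactly: the paper likewise derives this theorem immediately from the definitional inclusion $\text{TRVG} \subseteq \text{ITRVG}$ together with the preceding lemma exhibiting the graph $G$ of Figure~\ref{fig:ITRVGexam} as an ITRVG that is not a TRVG. Nothing further is needed.
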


\section{Concluding Remarks}\label{conclude}

Through the generalization of rectangle visibility graphs (RVG) by allowing transparency and intersections, we have obtained new results. Regarding transparent rectangle visibility graphs (TRVG), it is proven that $K_{3,3,3}$ is not a TRVG. Through generalizing results from \cite{CJTK}, we have shown that a complete $n$-partite graph $K_{a_1,a_2, \dots, a_n}$ such that $a_1 \leq a_2 \leq \dots \leq a_n$ is a TRVG if and only if $a_{n-1} \leq 2$ or $(a_{n-2},a_{n-1},a_n) \in \{(1,3,3),(1,3,4),(2,3,3),(2,3,4)\}$. Next, we have shown that the complement of $C_n^2$, denoted as $D_n^2$, is not a TRVG whenever $n \geq 15$, and that $k$-partite TRVGs with $n$ vertices have at most $2(k-1)n-k(k-1)$ edges. Note that for the cases of $10 \leq n \leq 14$, it is still uncertain whether $D^2_n$ is a TRVG or not. Finally, we introduced intersecting transparent rectangle visiblity graphs (ITRVG), which have proven to be a strict generalization of TRVGs, as there exists a graph which is an ITRVG but not a TRVG. 

Here we leave some open problems regarding ITRVGs and TRVGs.

\begin{problem}
  Find a necessary condition for an ITRVG to be a TRVG.
\end{problem}

\begin{problem}
    Find other graphs that are ITRVGs but not TRVGs.
\end{problem}

Since these results are not trivially gained, it implies that the generalizing of RVGs into TRVGs or ITRVGs is a demanding process, and that there is more to be discovered in these areas. Some open problems regarding TRVGs are listed in \cite{CJTK}. It is also possible that the results for TRVGs may be useful for computer science-related fields, as RVGs have seen usage for VLSI systems.

\section*{Acknowledgement}
The research of this paper was carried out while the authors were participating in the Geometry and Combinatorics Boot Camp 2025, in Bangkok, Thailand. We are grateful for the hospitality of the Department of Mathematics and Computer Science, Faculty of Science, Chulalongkorn University.

\bibliographystyle{siam}
\bibliography{TRVGs}

\end{document}